\newcommand{\defeq}{\stackrel{\mathrm{def}}{=}}
\newcommand{\comp}{\mathrm{comp}}
\newcommand{\loc}{\mathrm{loc}}
\newcommand{\e}{\mathrm{e}}
\newcommand{\supp}{\mathrm{supp\,}}
\newcommand{\Ima}{\mathrm{Im\,}}
\newcommand{\Rea}{\mathrm{Re\,}}
\newcommand{\mO}{\mathcal{O}}
\newcommand{\C}{\mathds{C}}
\newcommand{\R}{\mathds{R}}
\newcommand{\N}{\mathds{N}}
\newcommand{\Z}{\mathds{Z}}
\newtheorem{thm}{Theorem}
\newtheorem{prop}[thm]{Proposition}
\newtheorem{lem}[thm]{Lemma}
\theoremstyle{definition}
\theoremstyle{remark}
\newtheorem{rem}[thm]{Remark}
\theoremstyle{definition}
\theoremstyle{definition}
\theoremstyle{definition}
\numberwithin{equation}{section} 
\title{Semiclassical resolvent estimates for bounded potentials}
\author{Fr{\'e}d{\'e}ric Klopp} \author{Martin Vogel} \address[Fr{\'e}d{\'e}ric
Klopp]{ \vskip.1cm Sorbonne Universit{\'e}, Universit{\'e} Paris Diderot,
  CNRS, Institut de Math{\'e}matiques Jussieu - Paris Rive Gauche ,
  F-75005, Paris, France}
\email{\href{mailto:frederic.klopp@imj-prg.fr}{frederic.klopp@imj-prg.fr}}
\address[Martin Vogel]{Mathematics Department, University of
  California, Berkeley, 887 Evans Hall, CA 94720, USA}
\email{\href{mailto:vogel@math.berkeley.edu}{vogel@math.berkeley.edu}}
\begin{document}
\begin{abstract}
  We study the cut-off resolvent of semiclassical
  Schr{\"o}dinger operators on $\R^d$ with bounded compactly supported
  potentials $V$. We prove that for real energies $\lambda^2$ in a
  compact interval in $\R_+$ and for any smooth cut-off function $\chi$
  supported in a ball near the support of the potential $V$, for some
  constant $C>0$, one has
  \begin{equation*}
    \| \chi (-h^2\Delta + V-\lambda^2)^{-1} \chi \|_{L^2\to H^1} \leq C
    \,\e^{Ch^{-4/3}\log \frac{1}{h} }. 
  \end{equation*}
  This bound shows in particular an upper bound on the imaginary parts
  of the resonances $\lambda$, defined as a pole of the meromorphic
  continuation of the resolvent $(-h^2\Delta + V-\lambda^2)^{-1}$ as
  an operator $L^2_{\comp}\to H^2_{\loc}$: any resonance $\lambda$ with
  real part in a compact interval away from $0$ has imaginary part at
  most
  \begin{equation*}
    \Ima \lambda \leq - C^{-1} \,\e^{Ch^{-4/3}\log \frac{1}{h}  }.
  \end{equation*}
  This is related to a conjecture by Landis: 
  The principal Carleman estimate in our proof provides as well a
  lower bound on the decay rate of $L^2$ solutions $u$ to
  $-\Delta u = Vu$ with $0\not\equiv V\in L^{\infty}(\R^d)$.  We show
  that there exist a constant $M>0$ such that for any such $u$, for
  $R>0$ sufficiently large, one has
  \begin{equation*}
    \int_{B(0,R+1)\backslash \overline{B(0,R)}}|u(x)|^2 dx 
    \geq M^{-1}R^{-4/3} \e^{-M \|V\|_{\infty}^{2/3} R^{4/3}}\|u\|^2_2.
  \end{equation*}
%
%
\end{abstract}
\maketitle
\setcounter{tocdepth}{1}
\tableofcontents
\section{Introduction}\label{sec:Int0}
In quantum mechanics the study of scattering systems naturally leads
to the study of \textit{quantum resonances} or \textit{scattering
  poles}, which can be defined as the complex-valued poles of the
meromorphic continuation of the scattering matrix or of the resolvent
of the Hamiltonian into the ``nonphysical sheet" of the complex plane.
They can also be seen as a generalization of eigenvalues of a bounded
system in which energy can scatter to infinity. A typical associated
resonance state has then a rate of oscillation and a rate of decay or 
``inverse life-time" which can be associated to the imaginary part of
the resonance.  In wave scattering for instance, one can describe the
long-time dynamics of a wave, scattered on an obstacle or a potential,
via the resonances and the associated resonant states.  It is then the
resonances closest to real axis, i.e. those with the longest
``life-time", whose contribution in the scattered wave ``survives" the
longest. Therefore, the study of the resonances close to the real axis
is in some sense the most pertinent one.
%
%
\\
\par
We consider the semiclassical Schr{\"o}dinger operator on
$\R^d$
\begin{equation}\label{int1}
  P_V \defeq -h^2\Delta + V 
\end{equation}
where $h\in (0,1]$ denotes the semiclassical parameter and the potential
$V\in L^{\infty}_{\comp}(\R^d;\R)$ is real-valued with compact support.
The potential $V(x)=V(x;h)$ may depend on $h>0$. However, in this case
we suppose that
\begin{equation}\label{int1.0}
  \| V\|_{\infty} =C_V < + \infty
\end{equation}
and that the support of $V$ is contained in the ball
$B(0,R_0)\Subset\R^d$ of radius $R_0>0$, with both constants $C_V>0$
and $R_0>0$ independent of $h>0$.
\subsection{Resolvent bounds}
We prove the following resolvent estimate:
\begin{thm}\label{thm:1}
  Let $I$ be a compact interval in $\R\backslash \{0\}$. Let $R>R_0$
  and assume that the dimension $d\geq 2$. Then there exists constants
  $C>0$ and $h_0\in (0,1]$, so that for all $0<h\leq h_0$, all
  $v\in L^2_{comp}(B(0,R))$ and all $\lambda \in I$
  \begin{equation}\label{thm:1.1}
    \| (P_{V}-\lambda^2)^{-1} v \|_{H^1(B(0,R))} \leq  C\, \e^{Ch^{-4/3}\log \frac{1}{h}}\|v\|_2.
  \end{equation}
\end{thm}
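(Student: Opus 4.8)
The plan is to derive Theorem~\ref{thm:1} from a single global Carleman estimate for $-h^2\Delta-\lambda^2$ on the ball $B(0,R)$, into which the bounded potential $V$ enters as an \emph{absorbable} perturbation precisely once the Carleman parameter reaches the threshold $h^{-4/3}$; this is the same mechanism, and the same exponent $4/3$, that governs quantitative unique continuation for $-\Delta u=Vu$ with $V\in L^\infty$. Write $u\defeq(P_V-\lambda^2)^{-1}v$ for the outgoing resolvent, i.e.\ the boundary value from $\Ima\lambda>0$ (which exists for $\lambda\in I$), so that $(-h^2\Delta+V-\lambda^2)u=v$ with $\supp v,\supp V\subset B(0,R_0)$ and $u$ satisfies the Sommerfeld radiation condition; the goal is $\|u\|_{H^1(B(0,R))}\lesssim \e^{Ch^{-4/3}\log(1/h)}\|v\|_2$. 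A preliminary reduction disposes of the region $|x|>R_0$, where $(-h^2\Delta-\lambda^2)u=0$: a Rellich/Morawetz multiplier identity together with the radiation condition bounds $\|u\|_{H^1(B(0,R)\setminus B(0,R_0))}$, and the Cauchy data of $u$ on intermediate spheres, by the Cauchy data of $u$ on $\{|x|=R_0\}$ with constants polynomial in $1/h$ (the hypothesis $d\ge2$ is used here). No exponential growth is produced outside $\supp V$, so everything comes down to controlling $u$ near $\supp V$.

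The core is the Carleman estimate. Choose a radial weight $\psi(x)=g(|x|)$ with $g$ strictly convex and strictly increasing on the relevant range (modified near the origin, where $g'$ must not vanish), arranged so that the pair $(\psi,\,-h^2\Delta-\lambda^2)$ satisfies H\"ormander's sub-ellipticity condition on the \emph{non-empty} characteristic set $\{\,|\xi|^2=|\nabla\psi|^2+\lambda^2,\ \xi\cdot\nabla\psi=0\,\}$; the positive energy $\lambda^2$ makes this a genuine constraint, and the convexification needed to satisfy it — replacing $g$ by $(\e^{\beta g}-1)/\beta$, say — forces a convexity parameter $\beta\sim\log(1/h)$. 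Arrange also that $g'\le0$ for $|x|\ge R$ so that the far-field boundary terms carry the sign dictated by the outgoing condition. With a large parameter $s>0$ the estimate reads, schematically, for $w\in C_c^\infty(\R^d)$,
\begin{equation*}
 h^4 s^3\,\|\e^{s\psi}w\|_{L^2}^2+h^2 s\,\|\e^{s\psi}h\nabla w\|_{L^2}^2\ \le\ C\,\|\e^{s\psi}(-h^2\Delta-\lambda^2)w\|_{L^2}^2+(\text{boundary terms at }|x|=R),
\end{equation*}
the decisive feature being the \emph{cubic} power of $s$ in the $L^2$ term. It is proved in the standard way: conjugate, split $\e^{s\psi}(-h^2\Delta-\lambda^2)\e^{-s\psi}=A+iB$ into self- and skew-adjoint parts, expand $\|(A+iB)w\|^2=\|Aw\|^2+\|Bw\|^2+\langle i[B,A]w,w\rangle+(\text{bdry})$, bound $i[B,A]$ from below by the convexity of $\psi$, and add back a small multiple of $\|Aw\|^2$ to promote the gain to the cubic one.

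Now insert the solution. Pick $\chi\in C_c^\infty(B(0,R))$ equal to $1$ near $\overline{B(0,R_0)}$; applying the Carleman estimate to $w=\chi u$ and using $(-h^2\Delta-\lambda^2)(\chi u)=\chi v-\chi Vu+[-h^2\Delta,\chi]u$ gives
\begin{equation*}
 \|\e^{s\psi}(-h^2\Delta-\lambda^2)(\chi u)\|_{L^2}^2\ \lesssim\ \|\e^{s\psi}\chi v\|_{L^2}^2+C_V^2\,\|\e^{s\psi}\chi u\|_{L^2}^2+\|\e^{s\psi}[-h^2\Delta,\chi]u\|_{L^2}^2 .
\end{equation*}
The term $C_V^2\|\e^{s\psi}\chi u\|_{L^2}^2$ is absorbed into the left side of the Carleman estimate as soon as $h^4 s^3\gg C^2 C_V^2$, i.e.\ $s\gtrsim C_V^{2/3}h^{-4/3}$ — this is where $h^{-4/3}$ is born; the commutator $[-h^2\Delta,\chi]u$ is supported in the annulus where the preliminary reduction applies and is $O(h)$ there; and the boundary terms at $|x|=R$ vanish by the radiation condition and the sign of $g'$. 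Choosing $s=KC_V^{2/3}h^{-4/3}\log(1/h)$ with $K$ large (the extra $\log$ also soaking up the sub-leading first-order terms of the Carleman estimate and the $\beta\sim\log(1/h)$-sized convexity parameter above), dividing through by $\e^{s\psi}$ — bounded below by $\e^{s\psi_{\min}}$ on the region of interest and above by $\e^{s\psi_{\max}}$ on $\supp v$ — and feeding the preliminary reduction back in to control $\|u\|_{L^2}$ on the annulus and the Cauchy data on $\{|x|=R_0\}$, one obtains
\begin{equation*}
 \|u\|_{H^1(B(0,R))}\ \lesssim\ \e^{s(\psi_{\max}-\psi_{\min})}\,\|v\|_2\ =\ \e^{Ch^{-4/3}\log(1/h)}\,\|v\|_2,
\end{equation*}
which is \eqref{thm:1.1}. (Since $V$ is only $L^\infty$, the $H^1$ control near $\supp V$ is extracted from $h^2\Delta u=v+(\lambda^2-V)u\in L^2$ via the semiclassical elliptic estimate, at the cost of a power of $h$ only.)

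I expect the main obstacle to be the Carleman estimate of the second paragraph: one must construct a single weight that is at once sub-elliptic for $-h^2\Delta-\lambda^2$ at the \emph{positive} energy $\lambda^2$, delivers the cubic gain $s^3$ needed to overpower an $O(1)$-sized (and possibly $h$-dependent) potential so that the threshold lands at $h^{-4/3}$ rather than $h^{-2}$, and has boundary behaviour at $|x|=R$ compatible with the outgoing radiation condition so that the far-field contributions drop out. Reconciling these three requirements is precisely what forces the logarithmic loss, and is the heart of the argument; the preliminary exterior reduction and the absorption of the potential are comparatively routine once the weight is in hand.
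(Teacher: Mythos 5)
Your interior mechanism is the right one and matches the paper's Lemma \ref{lem:3.1}: a Carleman estimate for $-h^2\Delta-\lambda^2$ whose gain beats $\|V\|_\infty^2$ exactly when the weight reaches scale $C_V^{2/3}h^{-4/3}$ (the paper gets this by rescaling $\widetilde h=h^{4/3}C_1^{-1/3}$ rather than by tracking a power $s^3$, but that is cosmetic). The gap is in how you close the estimate outside $\supp V$. Applying the Carleman inequality to $\chi u$ produces the commutator $[-h^2\Delta,\chi]u$, supported on an annulus $A\subset B(0,R)\setminus B(0,R_0)$ where your increasing weight satisfies $\min_A\psi\geq\max_{\supp v}\psi$. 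Dividing out the weight therefore yields
\begin{equation*}
\|u\|^2_{L^2(B(0,R_0))}\ \lesssim\ \e^{2s(\psi_{\max,v}-\psi_{\min})}\|v\|^2\ +\ \e^{2s(\max_A\psi-\psi_{\min})}\,h^{-2}\,\|u\|^2_{H^1_h(A)},
\end{equation*}
and your ``preliminary reduction'' bounds $\|u\|_{H^1_h(A)}$ by the Cauchy data of $u$ on $\{|x|=R_0\}$ --- i.e.\ by the very quantity on the left --- with an \emph{exponentially large} prefactor. Nothing is absorbed; the argument is circular. The companion claim that ``the boundary terms at $|x|=R$ vanish by the radiation condition and the sign of $g'$'' is not correct either: for $\chi u$ there are no boundary terms, only the commutator, and for $u$ itself the Sommerfeld condition gives no sign information on the Cauchy data at a fixed finite radius. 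Note also that your argument never uses that $V$ is real-valued, whereas the paper stresses this is essential.

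What is actually needed (Sections 3.2--3.4 of the paper) is a second Carleman estimate in a shell $B(0,R_0,R_3)$ with $R_3\asymp h^{-1/3}$, with a radial weight $\phi_0$ whose two commutator error regions interlock with those of the interior estimate: $\phi_0\le-\eta$ where the shell estimate's inner cutoff lives and $\phi_0\ge M=\max\phi$ where the interior estimate's cutoff lives, so that adding the two inequalities absorbs both cross terms. The positive-energy pseudoconvexity condition \eqref{eq3.23.0} pins $\phi_0'(r)=(Ar^{-2}-h^{2/3}\lambda^2/2)^{1/2}$, which forces the shell to extend to $R_c\sim h^{-1/3}$ and makes $\phi_0$ grow like $A^{1/2}\log r$; this, not a convexification parameter $\beta\sim\log(1/h)$, is the true origin of the $\log\frac1h$ in the exponent. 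Finally the surviving error at $r\sim R_3$ cannot be discarded: it is controlled by the flux identity $-h\,\Ima\int_{r=R_2}h\partial_r u\,\overline u\,d\sigma=\Ima(v|u)$ (which requires $V$ real) combined with Burq's lower bound for outgoing solutions (Lemma \ref{lem:Out}), and only then does Cauchy--Schwarz on $\Ima(v|u)$ close the estimate. Your proposal is missing this entire exterior mechanism, which is the heart of the proof.
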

In dimension $d=1$ a stronger result is known: there we have that
\begin{equation}\label{thm:1.1.0}
  \| (P_{V}-\lambda^2)^{-1} v \|_{H^1(B(0,R))} \leq C\, \e^{Ch^{-1}}\|v\|_2, 
\end{equation}
see for instance the proof in \cite[Theorem 2.29]{DZ17+}. From our
proof of Theorem \ref{thm:1} in dimension $d\geq2$ we get actually
that the statement holds when we replace $H^1(B(0,R))$ on the left
hand side of \eqref{thm:1.1} by $H^1(B(0,Rh^{-1/3})$ for any $R>0$.
\par Equivalently, we can formulate the statement of Theorem
\ref{thm:1} as an estimate on the cut-off resolvent. More precisely,
we have for any $\chi \in \mathcal{C}^{\infty}_c(\R^d)$ with $0\leq \chi\leq 1$
and $\chi\equiv 1$ near $B(0,R_0)$ that there exist constants $C>0$
and $h_0 \in (0,1]$ such that
\begin{equation}\label{thm:1.1b}
  \| \chi (P_{V}-\lambda^2)^{-1} \chi \|_{L^2\to H^1} \leq C
  \,\e^{Ch^{-4/3}\log \frac{1}{h}}. 
\end{equation}
\par
Shapiro \cite{Sh18+} obtained independently from our
work a quantitative limiting absorption principle for $P_V$, with
$V \in L^{\infty}_{\comp}(\R^d;\R)$, in dimension $d\geq 1$. Shapiro
proved that for fixed positive energy $E>0$ and $s>1/2$ one has for
$h>0$ small enough and any $\varepsilon >0$ that
\begin{equation}\label{thm:1.1bb}
  \| \langle |x|\rangle^{-s} (P_{V}-E- i\varepsilon )^{-1}\langle |x|\rangle^{-s}  \|_{L^2\to H^2} \leq \e^{Ch^{-4/3}\log h^{-1}},
\end{equation}
for some constant $C>0$ depending only on the $L^{\infty}$ norm of
$V$, the energy $E$, the dimension $d$ and $s$.
\par
For any other resolvent estimates so far, one assumed at least that
not only $V$ but also the radial derivative $\partial_r V$ are
bounded: Datchev \cite{Da14} proved a quantitative limiting absorption
principal in dimension $d\neq 2$ for $L^{\infty}$ potentials $V$ with
radial derivative $\partial_r V \in L^{\infty}$ satisfying the decay
conditions $V\leq\langle r\rangle^{-\delta_0}$ and
$\partial_r V \leq \langle r\rangle^{-1-\delta_0}$, i.e.
\begin{equation}
  \label{thm:1.1d}
  \| \langle |x|\rangle^{-s} (P_{V}-E- i\varepsilon )^{-1}\langle |x|\rangle^{-s}  \|_{L^2\to L^2} \leq \e^{C_1h^{-1}},
\end{equation}
for $E>0$, any $s>1/2$, $h>0$ small enough and any $\varepsilon >0$. In dimension $d=2$ 
Shapiro \cite{Sh16} proved \eqref{thm:1.1d} replacing the above assumptions on $\partial_r V$ with $\nabla V \in L^{\infty}$ 
and $|\nabla V |\leq \langle r\rangle^{-1-\delta_0}$. Vodev
\cite{Vod14} proved a bound similar to \eqref{thm:1.1d} for potentials
satisfying the decay conditions
$\sup_{\R^d} \langle x\rangle^{1+\delta} |V(x,h)| \leq C h^{\nu}$ and
$\partial_r V \leq Ch^{\nu}\langle r\rangle^{-1-\delta}$ for some
constants $C,\nu,\delta>0$. Dyatlov and Zworski \cite{DZ17+}
simplified Datchev's proof for $V, \partial_r V \in L^{\infty}_{\comp}$
in dimension $d\neq 2$ and showed
\begin{equation}\label{thm:1.1c}
  \| \chi (P_{V}-\lambda^2)^{-1} \chi \|_{L^2\to L^2} \leq C_0 \,\e^{C_1 h^{-1}}.
\end{equation}
Similar results were proven for various cases of short-range and
long-range perturbations of the Laplacian $-h^2\Delta$ under stronger
regularity assumptions. Burq \cite{Bu98,Bu02} proved \eqref{thm:1.1c}
for smooth $V$ decaying sufficiently quickly near infinity and on
domains of type $\R^d\backslash \overline{O}$ for some compact
obstacle $O$ with smooth boundary.  Different proofs of Burq's
theorem, providing some simplifications and extensions were given by
Vodev \cite{Vo00} and Sj{\"o}strand \cite{Sj02ln}. Moreover, 
Cardoso and Vodev \cite{CaVo02}  provide a version of Burq's theorem on a class 
of infinite volume Riemannian manifolds with cusps. 
\subsection{Resonance free regions}
As a consequence of Theorem \ref{thm:1} we get that there is a
resonance free region below the real axis away from $0$.
\par
There are various ways of defining \textit{resonances} of a quantum
Hamiltonian, see for instance \cite{DZ17+} for an overview. One way is
to define them as the poles of the a meromorphic continuation of the
resolvent through the essential spectrum. More precisely, we have the
following well-known result \cite[Proposition 2.1]{Sj02ln} and \cite[Theorem 3.6]{DZ17+}:
\begin{thm}\label{thm:Mc2a}
  The meromorphic family of operators
  \begin{equation*}
    (P_V - \lambda^2)^{-1} : ~ L^2_{\comp}(\R^d) \longrightarrow
    H^2_{\loc}(\R^d),   \quad \Ima \lambda >0,
  \end{equation*}
  has a meromorphic extension from the upper half-plane
  $\Ima \lambda >0$ to
  \begin{enumerate}
  \item $\lambda \in\C\backslash\{0\}$, when $d=1$,
  \item $\lambda \in\C $, when $d\geq 3$ is odd,
  \item $\lambda$ in the logarithmic covering space of
    $\C\backslash\{0\}$, when $d\geq 2$ is even.
  \end{enumerate}
\end{thm}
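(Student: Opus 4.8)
The plan is to deduce the statement from two classical facts: the mapping and analytic-continuation properties of the \emph{free} resolvent, and analytic Fredholm theory applied to a Birman--Schwinger type factorisation of $(P_V-\lambda^2)^{-1}$. The semiclassical parameter is harmless here: the identity $-h^2\Delta-\lambda^2=h^2(-\Delta-(\lambda/h)^2)$ reduces everything to $h=1$ without changing the continuation domain in $\lambda$.

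The first and only substantial step concerns the free resolvent $R_0(\lambda):=(-h^2\Delta-\lambda^2)^{-1}$. Since $-\Delta$ is self-adjoint with $\spec(-\Delta)=[0,\infty)$, for $\Ima\lambda>0$ the family $R_0(\lambda)$ is bounded and holomorphic on $L^2(\R^d)$ with $\|R_0(\lambda)\|_{L^2\to L^2}=\dist(\lambda^2,[0,\infty))^{-1}$, which tends to $0$ along $\lambda=iT$ as $T\to\infty$. Its Schwartz kernel is, up to a dimensional constant, $(\lambda/|x-y|)^{(d-2)/2}H^{(1)}_{(d-2)/2}(\lambda|x-y|)$, and from the explicit behaviour of the Hankel functions one checks — by pairing against compactly supported $L^2$ data and localising with a cut-off — that $R_0(\lambda)$, viewed as an operator $L^2_{\comp}(\R^d)\to H^2_{\loc}(\R^d)$, extends holomorphically to $\lambda\in\C$ when $d\geq3$ is odd (there the kernel is $\e^{i\lambda|x-y|}$ times a polynomial in $(\lambda|x-y|)^{-1}$, entire in $\lambda$), meromorphically to $\C\setminus\{0\}$ with only a simple pole at $0$ when $d=1$, and holomorphically to the logarithmic covering space of $\C\setminus\{0\}$ when $d$ is even, the branch point at the origin being forced by the logarithmic singularity of $H^{(1)}_0$ and of the higher-order Hankel functions. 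I would either quote this from the literature or reprove it from the Bessel/Hankel expansions; this is the step carrying all the dimension-dependent content, and I expect it to be the main obstacle, everything afterwards being soft functional analysis. Write $\Omega$ for the resulting connected domain in each of the three cases.

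It remains to transplant the continuation through the perturbation. Write $V=\tilde VB^2$ with $B:=|V|^{1/2}\in L^\infty_{\comp}(\R^d)$ and $\tilde V:=\operatorname{sgn}V$, both supported in $B(0,R_0)$, and fix $\rho\in\mathcal C^\infty_c(\R^d)$ with $\rho\equiv1$ near $B(0,R_0)$. Multiplication by $B$ or $\tilde V$ maps $L^2$ into $L^2_{\comp}$, and $\rho R_0(\lambda)$ maps $L^2_{\comp}$ into $H^2_{\comp}$, so by Rellich's theorem
\begin{equation*}
  \mathcal A(\lambda):=B\,R_0(\lambda)\,\tilde V B=B\,\rho R_0(\lambda)\rho\,\tilde V B:L^2(\R^d)\longrightarrow L^2(\R^d)
\end{equation*}
is a compact operator for each $\lambda\in\Omega$, holomorphic in $\lambda\in\Omega$. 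A short computation with the second resolvent identity shows that whenever $\Ima\lambda>0$ and $\lambda^2\notin\spec(P_V)$ — i.e. for all $\lambda$ in the upper half-plane outside the finitely many points $i\sqrt{-E}$ coming from negative eigenvalues $E$ of $P_V$ — the operator $I+\mathcal A(\lambda)$ is invertible on $L^2(\R^d)$ and
\begin{equation*}
  (P_V-\lambda^2)^{-1}=R_0(\lambda)-R_0(\lambda)\,\tilde V B\,(I+\mathcal A(\lambda))^{-1}B\,R_0(\lambda),
\end{equation*}
the invertibility of $I+\mathcal A(iT)$ for large $T$ also following directly from the Neumann series since $\|\mathcal A(\lambda)\|\to0$. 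By the analytic Fredholm theorem — $\Omega$ connected, $\mathcal A(\cdot)$ a holomorphic family of compact operators, $I+\mathcal A(iT)$ invertible — the map $\lambda\mapsto(I+\mathcal A(\lambda))^{-1}$ is a meromorphic family of bounded operators on all of $\Omega$ with finite-rank principal parts. Substituting it into the displayed formula yields a meromorphic family $L^2_{\comp}(\R^d)\to H^2_{\loc}(\R^d)$ on $\Omega$ — here one uses that multiplication by $\tilde V B$ returns the range of $R_0(\lambda)$ to functions supported in $B(0,R_0)$, so $R_0(\lambda)$ may legitimately be applied a second time — which coincides with $(P_V-\lambda^2)^{-1}$ on a non-empty open subset of $\{\Ima\lambda>0\}$. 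By the identity theorem for meromorphic families this is the asserted meromorphic extension, and it is unique for the same reason; its poles are precisely the $\lambda\in\Omega$ at which $I+\mathcal A(\lambda)$ is not injective, i.e. the resonances of $P_V$.
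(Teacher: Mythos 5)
Your argument is correct; the paper does not actually prove Theorem \ref{thm:Mc2a} but cites \cite{Sj02ln} and \cite{DZ17+}, and your route --- holomorphic continuation of the free resolvent via its Hankel-function kernel, followed by a symmetrized Birman--Schwinger factorisation, Rellich compactness and analytic Fredholm theory --- is essentially the standard proof found in those references (and parallels the factorisation $R_V(\mu)=R_0(\mu)(1+Q(\mu))^{-1}$, $Q=VR_0$, that the authors themselves use in Section 4). The only difference is cosmetic: your two-sided factor $B=|V|^{1/2}$ localises automatically so that $BR_0(\lambda)\tilde VB$ is compact outright, whereas the cited arguments split $1+VR_0=(1+VR_0(1-\chi))(1+VR_0\chi)$ and use that $VR_0(1-\chi)$ is nilpotent because $(1-\chi)V=0$.
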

The \textit{resonances} of $P_V$ are then defined as the poles of this
extension with possibly the exception of the $L^2$ eigenvalues of
$P_V$ situated on the imaginary axis $i[0,+\infty)$. See Section
\ref{sec:Res} below for more details.
\par We prove the following
\begin{thm}\label{thm:2}
  Let $I$ be a compact interval in $\R\backslash \{0\}$ and suppose
  that $d\geq 2$, then there exists constants $C>0$ and $h_0\in (0,1]$
  such that for $ 0< h \leq h_0$ there are no resonances of $P_V$ in the
  set of $\lambda \in \C$ with
  \begin{equation*}
    \Rea \lambda \in I, \quad \Ima \lambda \geq - C^{-1}\,\e^{-Ch^{-4/3}\log
      \frac{1}{h}}. 
  \end{equation*}	
\end{thm}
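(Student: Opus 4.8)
The plan is to deduce Theorem~\ref{thm:2} from the resolvent estimate of Theorem~\ref{thm:1} by a standard analytic-continuation / a~priori-bound argument. The starting point is that Theorem~\ref{thm:1} gives a polynomial-in-$h^{-1}$-exponential bound on $\|\chi(P_V-\lambda^2)^{-1}\chi\|_{L^2\to H^1}$ for \emph{real} $\lambda\in I$ (equivalently \eqref{thm:1.1b}). Since the cut-off resolvent $\lambda\mapsto \chi(P_V-\lambda^2)^{-1}\chi$ is a holomorphic family of bounded operators on the region where it is defined (the upper half plane, meromorphically continued through the real axis as in Theorem~\ref{thm:Mc2a}), and since it has no poles \emph{on} the real interval $I$ once $h$ is small --- this follows because the bound \eqref{thm:1.1b} is finite there --- the idea is to propagate the real-axis bound a little way into the lower half plane using a quantitative three-lines / maximum-principle type argument, or more directly an explicit Neumann-series perturbation.

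Concretely, I would first record that for $\Ima\lambda>0$ one has the identity
\begin{equation*}
  \chi(P_V-\lambda^2)^{-1}\chi = \chi(P_V-\mu^2)^{-1}\chi
  \bigl(I + (\mu^2-\lambda^2)\,(P_V-\mu^2)^{-1}\chi\bigr)^{-1}
\end{equation*}
-- or rather the resolvent identity relating the value at a complex $\lambda$ close to a real $\mu\in I$ to the value at $\mu$. Writing $\lambda=\mu-i\tau$ with $0\le\tau\le \varepsilon_0 h^{N}$ for a suitable small constant and exponent, one has $|\mu^2-\lambda^2|\lesssim \tau$, so the correction operator $(\mu^2-\lambda^2)\chi(P_V-\mu^2)^{-1}\chi$ has operator norm on $L^2$ (and into $H^1$) bounded by $C\tau\,\e^{Ch^{-4/3}\log(1/h)}$ by \eqref{thm:1.1b}. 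If we choose $\tau \le \tfrac12 C^{-1}\e^{-Ch^{-4/3}\log(1/h)}$ this norm is $\le \tfrac12$, so the Neumann series converges and $\chi(P_V-\lambda^2)^{-1}\chi$ is a bounded holomorphic operator there; in particular it has no pole. Since every resonance $\lambda$ with $\Rea\lambda\in I$ would be a pole of this cut-off resolvent (cut-offs supported near $\supp V$ detect all resonances, cf.\ the discussion in Section~\ref{sec:Res} / \cite{DZ17+}), there can be no resonance with $\Rea\lambda\in I$ and $\Ima\lambda\ge -C^{-1}\e^{-Ch^{-4/3}\log(1/h)}$, which is exactly the claim after relabelling constants.

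One technical point to handle carefully: the estimate \eqref{thm:1.1b} is stated for $\lambda\in I$ real, whereas in the Neumann-series step I use it only at the real base point $\mu$, which is fine; but I should make sure the interval $I$ in the statement of Theorem~\ref{thm:2} is taken slightly smaller than (or equal to, after shrinking) the one for which Theorem~\ref{thm:1} holds, so that a full complex neighbourhood of $\Rea\lambda\in I$ at depth $\tau$ is covered by base points $\mu$ in the larger interval --- a harmless adjustment. A second point is to invoke the correct functional-analytic statement that poles of the meromorphically continued weighted/cut-off resolvent coincide with the resonances (this is where Theorem~\ref{thm:Mc2a} and the standard gluing argument enter), so that ``no pole of $\chi(P_V-\lambda^2)^{-1}\chi$'' really does mean ``no resonance.''

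The main obstacle is essentially bookkeeping rather than conceptual: there is no hard analysis left once Theorem~\ref{thm:1} is in hand, the delicate work having been done in the Carleman estimate behind Theorem~\ref{thm:1}. The one place to be a little attentive is the mapping properties --- \eqref{thm:1.1b} is $L^2\to H^1$, and to run the Neumann series cleanly one wants the correction operator to map a space to itself; composing $\chi(P_V-\mu^2)^{-1}\chi\colon L^2\to H^1\hookrightarrow L^2$ works, and one recovers the $H^1$ bound at the end by one more application of \eqref{thm:1.1b}. Thus the proof amounts to: (i) reduce to a statement about absence of poles of the cut-off resolvent near $I$; (ii) perturb off the real axis by a Neumann series controlled by \eqref{thm:1.1b}; (iii) choose the width of the strip to be a small constant times $\e^{-Ch^{-4/3}\log(1/h)}$ and relabel constants.
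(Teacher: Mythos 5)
Your overall strategy is the right one and is the same as the paper's: use the bound of Theorem~\ref{thm:1} at a real base point $\mu\in I$, perturb to nearby complex $\lambda$, and invert $1+(\text{small error})$ by a Neumann series whose convergence forces $|\lambda-\mu|\lesssim \e^{-Ch^{-4/3}\log(1/h)}$. However, the specific identity you propose to run the Neumann series on does not exist, and this is a genuine gap rather than bookkeeping. The resolvent identity $R_V(\lambda)=R_V(\mu)\bigl(1+(\mu^2-\lambda^2)R_V(\mu)\bigr)^{-1}$ involves the \emph{full} resolvent: the perturbation $\mu^2-\lambda^2$ is multiplication by a constant on all of $\R^d$, and $R_V(\mu)$ does not preserve compact supports, so you cannot insert cut-offs $\chi$ on both sides and obtain a closed identity for $\chi R_V(\mu)\chi$. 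The operator $(\mu^2-\lambda^2)\chi R_V(\mu)\chi$ that you bound by $C\tau\,\e^{Ch^{-4/3}\log(1/h)}$ is not the correction operator in any valid resolvent identity; the true correction involves $R_V(\mu)$ acting on non-compactly-supported data, which Theorem~\ref{thm:1} does not control (and for $\Ima\mu\le 0$ the continued resolvent is only defined $L^2_{\comp}\to H^2_{\loc}$ in the first place, so the iterates of the naive Neumann series are not even well defined).

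The missing ingredient is a gluing of an interior and an exterior parametrix. The paper sets $A(\mu)=\chi_2 R_V(\lambda)\chi_1-R_0(\mu)[P_0,\chi_2]R_V(\lambda)\chi_1$ (interior: cut-off resolvent at the \emph{real} point $\lambda$) and $B(\mu)=(1-\chi_0)R_0(\mu)(1-\chi_1)+A(\mu)[P_0,\chi_0]R_0(\mu)(1-\chi_1)$ (exterior: the \emph{free} resolvent at the complex point $\mu$, which is holomorphic there and obeys the $\e^{C/h}$ bound of Theorem~\ref{thm:Mc1}). Because the commutators $[P_0,\chi_j]$ and the potential $V$ are compactly supported, the error $K$ in $(P_V-\mu^2)(A+B)=1+K$ maps $L^2_{\comp}(B(0,R))$ to itself and has norm $\mO(|\lambda-\mu|\,\e^{Ch^{-4/3}\log(1/h)})$; only then does the Neumann series close, giving holomorphy of $R_V(\mu)=\widetilde R(\mu)(1+K)^{-1}$ in the exponentially thin strip. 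So your step (ii) needs to be replaced by this parametrix construction; steps (i) and (iii), including the identification of resonances with poles of the continued resolvent and the final choice of the strip width, are fine.
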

In the case of dimension one $d=1$ we have a stronger result: there
exist constants $C_0,C_1>0$ and $h_0\in (0,1]$ such that for $ 0< h \leq h_0$
there are no resonances of $P_V$ in the set of $\lambda \in \C$ with
\begin{equation*}
  \Rea \lambda \in I, \quad \Ima \lambda \geq - C^{-1}\,\e^{-Ch^{-1}},
\end{equation*}	
see for instance \cite[Theorem 2.29]{DZ17+}. This bound is optimal as
can be seen for the study of resonances for cut off random potentials
\cite{Kl16}.
\subsection{Remark on Landis' conjecture and decay of eigenfunctions}
We do not think that the bounds in \eqref{thm:1.1} and in Theorem
\ref{thm:2} are optimal.  The $h^{-4/3}$ in the exponent comes from a
Carleman estimate in a ball $B(0,R)$ with $R>R_0$ which cannot
distinguish between real-valued and complex-valued potentials, see
Lemma \ref{lem:3.1} below. Yet in the proof of Theorem \ref{thm:1} we
crucially use that the potential $V$ is assumed to be real-valued in
flux norm estimate on outgoing solution in Lemma \ref{lem:Out}. We now
present a slightly modified version of our main Carleman estimate:
\begin{lem}(see Lemma \ref{lem:3.1})\label{lem:1.1} Let $P_V$ be as in
  \eqref{int1} with $V\in L^{\infty}_{\comp}(\R^d,\C)$ a bounded
  (possibly) complex valued potential with compact support satisfying
  \eqref{int1.0}. Let $I\Subset \R$ be a compact interval.  Let
  $R>R_0$. Then, there exists a real-valued smooth function
  $\phi \in\mathcal{C}^{\infty}(\R^d)$ and a constants $C>0$ and
  $h_0\in (0,1]$, such that for all $u\in\mathcal{C}_c^{\infty}(B(0,R))$,
  all $\lambda\in I$ and all $0<h\leq h_0$
  \begin{equation*}
    \int \e^{2\phi/h^{4/3}}(|u|^2 + |h\nabla u|^2 ) dx 
    \leq \frac{C}{h^{2/3}}
    \int \e^{2\phi/h^{4/3}}| (P_V - \lambda^2)u|^2 dx. 
  \end{equation*}	
\end{lem}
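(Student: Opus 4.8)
The goal is a semiclassical Carleman estimate with a non-trivial weight $e^{2\phi/h^{4/3}}$, where the loss on the right is $h^{-2/3}$ (rather than the usual $h^{-1}$), and the weight is scaled by $h^{-4/3}$ rather than $h^{-1}$. The extra room, compared to classical Carleman estimates, is precisely what one needs to absorb an $L^\infty$ potential without controlling its derivatives.

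Let me think about what the weight should be. For a Carleman estimate for $-h^2\Delta - \lambda^2$, the standard choice is $\phi(x) = $ something like a function of $|x|$, convex along bicharacteristics. With weight $e^{\phi/h}$ the conjugated operator is $P_\phi = e^{\phi/h}(-h^2\Delta)e^{-\phi/h} = -h^2\Delta - |\nabla\phi|^2 + 2h\nabla\phi\cdot h\nabla + h(\Delta\phi)$... wait, let me be careful with the scaling $h^{-4/3}$. Set $\mu = h^{-4/3}$, i.e., weight $e^{\mu\phi}$. Then
$$e^{\mu\phi}(h^2 D^2)e^{-\mu\phi} = (hD + ih^2\mu\nabla\phi)^2 = h^2D^2 - h^4\mu^2|\nabla\phi|^2 + ih^3\mu(D\cdot\nabla\phi + \nabla\phi\cdot D).$$
So the "real part" symbol is $h^2\xi^2 - h^4\mu^2|\nabla\phi|^2 - \lambda^2$ (with $h^4\mu^2 = h^{4-8/3} = h^{4/3}$) and the "imaginary part" is $2h^3\mu\,\nabla\phi\cdot\xi = 2h^{3-4/3}\nabla\phi\cdot\xi = 2h^{5/3}\nabla\phi\cdot\xi$. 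The Poisson bracket of real and imaginary parts, which governs Hörmander's positivity, will produce terms of order $h^2\cdot h^{5/3} = h^{11/3}$ times $\nabla^2\phi$ acting on $\xi$'s, and $h^4\mu^2\cdot h^{5/3} = h^{3}$ type terms from differentiating $|\nabla\phi|^2$. Meanwhile the $L^\infty$ potential contributes a term bounded by $C_V e^{\mu\phi}|u|^2$, and the loss we can afford on the left is $h^{2/3}$ times... let me reorganize: the statement says LHS $\le C h^{-2/3}$ RHS, so effectively we need a lower bound $\gtrsim h^{2/3}(|u|^2+|h\nabla u|^2)$ on $\|P_\phi u\|^2$ after integration by parts, and the potential term $\le C_V|u|^2$ must be absorbed — which requires $h^{2/3}\gg C_V$? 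No — that fails for $h$ small. So the potential must be absorbed by the *gain* in the subelliptic/convexity estimate, not by the naive term. This is the crux: the convexity of $\phi$ must be chosen large enough (but $h$-independent) so that the commutator positivity beats $C_V$, and the $h^{-4/3}$ scaling is exactly the threshold at which this is possible while keeping the weight's contribution controlled.

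Concretely, I would: (1) pick $\phi$ of the form $\phi(x) = f(\varphi(x))$ with $\varphi$ a suitable base function (e.g. built from $\log|x|$ or a radial function adapted to $B(0,R)\setminus B(0,R_0)$, flat near the origin) and $f$ convex, increasing, with $f''$ large — this is Hörmander's hypoellipticity/pseudoconvexity trick: replacing $\phi$ by $e^{\tau\phi}$ or $f\circ\phi$ makes the relevant quadratic form in $(\xi,\nabla\phi)$ positive definite with a gain proportional to the convexity parameter $\tau$; (2) compute $\|e^{\mu\phi}(P_V-\lambda^2)u\|^2$ for $u = e^{-\mu\phi}v$, i.e. $\|P_\phi v + Vv\|^2$, expand, and integrate by parts the cross term $2\Rea\langle P_\phi v, Vv\rangle$ — or better, first prove the estimate for $V\equiv 0$ with a large constant gain $\sim \tau h^{2/3}(\|v\|^2+\|h\nabla v\|^2)$ on the RHS lower bound, then absorb $\|Vv\|^2\le C_V^2\|v\|^2$ and the cross terms by choosing $\tau$ (hence $\phi$) large enough, independent of $h$; (3) the $h^{2/3}$ appears because, after dividing the commutator estimate (whose natural scale is $h^{11/3}$ or $h^3$) by the weight-squared normalization $h^4\mu^2 = h^{4/3}$ coming from the $|\nabla\phi|^2$ term in $P_\phi$, one lands on $h^{11/3-3} = h^{2/3}$... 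I'd need to track this bookkeeping carefully but the arithmetic $4/3$ and $2/3$ are consistent with one power of the subelliptic gain $h^{1/3}$ being "spent" twice. The final packaging: go from $\mathcal{C}^\infty_c(B(0,R))$ functions $u$, substitute, and translate the estimate on $v$ back to $u$.

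**Main obstacle.** The delicate point is the boundary/support geometry combined with getting the constant in front of $h^{2/3}$ independent of $h$ while large enough to swallow $C_V$: one must verify that the convexity parameter $\tau$ needed to make Hörmander's quadratic form positive *and* dominate $C_V^2$ can be chosen uniformly in $h\in(0,h_0]$, and that the weight $\phi$ can simultaneously be arranged to be (essentially) non-increasing or appropriately behaved near $\partial B(0,R)$ and near $\supp V$ so that no uncontrolled boundary terms appear from the integrations by parts for $u\in\mathcal{C}^\infty_c$. A secondary subtlety is that, since we only assume $V\in L^\infty$, every manipulation involving $V$ must avoid $\nabla V$; this forces us to treat $Vv$ purely as an $L^2$ perturbation and never integrate it by parts, which is exactly why the subelliptic gain (not just ellipticity) is essential and why the exponent is $4/3$ rather than $1$.
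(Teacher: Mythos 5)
Your strategic outline is essentially the right one and matches the paper's in spirit: prove the weighted estimate for the free Laplacian first, with a gain large enough (via an $h$-independent large parameter in the weight) to swallow $\|V-\lambda^2\|_\infty\|v\|$ by the triangle inequality, never touching $\nabla V$. But two things keep this from being a proof. First, the paper does not redo H\"ormander's commutator computation at all: it observes that with $\widetilde{h}=h^{4/3}C_1^{-1/3}$ one has $(h/C_1)^{2/3}(P_V-\lambda^2)=-\widetilde{h}^{\,2}\Delta+(\widetilde{h}/C_1)^{1/2}(V-\lambda^2)$, so the potential becomes $\mathcal{O}(\widetilde{h}^{1/2})$ in operator norm and is absorbed directly into the standard Carleman estimate $\widetilde{h}\int(|v|^2+|\widetilde{h}\nabla v|^2)\leq C_0\|\e^{\phi/\widetilde{h}}(-\widetilde{h}^2\Delta)\e^{-\phi/\widetilde{h}}v\|^2$ once $C_1\geq 4C_0\|V-\lambda^2\|_\infty^2$; unwinding $\widetilde h\mapsto h$ produces the weight $\e^{C_1^{1/3}\phi/h^{4/3}}$ and the loss $h^{-2/3}$ (coming only from $|\widetilde h\nabla v|^2=h^{2/3}C_1^{-2/3}|h\nabla v|^2$). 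Your plan to rederive the estimate from the conjugated symbol is workable but strictly harder, and your worry about boundary terms near $\partial B(0,R)$ is moot since $u$ has compact support in $B(0,R)$.

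Second, and more seriously, the bookkeeping you do show is wrong, and in this lemma the bookkeeping is the entire content. The conjugation is $\e^{\mu\phi}(hD)\e^{-\mu\phi}=hD+ih\mu\nabla\phi$, not $hD+ih^2\mu\nabla\phi$; with $\mu=h^{-4/3}$ this gives $\e^{\mu\phi}(-h^2\Delta)\e^{-\mu\phi}=h^2D^2-h^{-2/3}|\nabla\phi|^2+ih^{2/3}(D\cdot\nabla\phi+\nabla\phi\cdot D)$, so the $|\nabla\phi|^2$ term is \emph{large} ($h^{-2/3}$, not $h^{4/3}$) and the antisymmetric part carries $h^{2/3}$, not $h^{5/3}$; your subsequent powers $h^{11/3}$, $h^3$, and the cancellation ``$h^{11/3-3}=h^{2/3}$'' do not correspond to the actual computation. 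The correct quantitative fact you need — and never establish — is that the conjugated free operator at weight scale $h^{-4/3}$ equals $h^{2/3}$ times the conjugated operator at semiclassical scale $\widetilde h=h^{4/3}$, so the standard estimate yields $\int(|v|^2+h^{2/3}|h\nabla v|^2)\,dx\leq C_0\int|\e^{\phi/h^{4/3}}(-h^2\Delta)\e^{-\phi/h^{4/3}}v|^2dx$ with an $h$-\emph{independent} constant gain on $\|v\|^2$: this is precisely why an $L^\infty$ potential can be absorbed at exponent $4/3$ and not below, and why the final loss is exactly $h^{-2/3}$.
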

Here, the exponent $h^{-4/3}$ is optimal, since there we can allow for
complex-valued potentials. This can be seen from a counter example to
Landis conjecture \cite{KoLa88} by Meshkov \cite{Me92}: Landis
conjectured that if $u$ is a bounded solution to $-\Delta u + Vu=0$ in
$\R^d$, with $\|V\|_{\infty} = \mO(1)$ and
$|u(x)| \leq C\exp(-c|x|^{1+})$, then $u\equiv 0$.  The conjecture holds
in dimension $d=1$ which is consistent with the estimate
\eqref{thm:1.1.0}.  However, in general this conjecture was disproved
by Meshkov \cite{Me92} who constructed a complex-valued bounded
potential $V$ and a complex-valued function $u$ which solve
$-\Delta u + Vu=0$ in $\R^2$ such that $|u(x)|\leq C\exp(-c|x|^{4/3})$,
$x\in\R^2$. Meshkov \cite{Me92} also proved a quantitative unique
continuation principal: if $u$ is a bounded solution to
$-\Delta u + Vu=0$ and decays faster than $\exp(-\tau |x|^{4/3})$ for
any $\tau>0$ as $|x|\to +\infty$, then necessarily $u\equiv 0$.
\par As a consequence of Lemma \ref{lem:3.1} we get the following
lower bound on the decay of eigenfunctions of Schr{\"o}dinger operator
with $L^{\infty}$ potentials.
\begin{thm}\label{thm3}
  Let $\|V\|_{\infty} \leq C_V$ with $V\not\equiv 0$. Then, there exist a
  constant $M>0$ such that for any solution $u$ to
  \begin{equation}\label{RL0}
    -\Delta u + V u = 0 \quad \text{in } \R^d
  \end{equation}
  satisfying $\|u\|_2 =1$, for $R>0$ sufficiently large
  \begin{equation}\label{RL1}
    \int_{B(0,R,R+1)}|u(x)|^2 dx \geq M^{-1}R^{-4/3} \e^{-M
      \|V\|_{\infty}^{2/3} R^{4/3}}, 
  \end{equation}
  where $B(0,R,R+1)=B(0,R+1)\backslash \overline{B(0,R)}\Subset \R^d$
  denotes the annulus of inner radius $R$ and outer radius $R+1$
  centered at $0$.
\end{thm}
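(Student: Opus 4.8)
The plan is to rescale so that the fixed-radius Carleman estimate of Lemma~\ref{lem:3.1} applies, and then to run a one-step propagation-of-smallness argument from the ball $B(0,R)$ out to the annulus $B(0,R,R+1)$. \textbf{Rescaling.} Fix $R>0$ large, put $h\defeq(\sqrt{C_V}\,R)^{-1}$ and $\tilde u(y)\defeq u(Ry)$. From \eqref{RL0} one gets $P_W\tilde u=0$ with $P_W=-h^2\Delta+W$ and $W(y)\defeq C_V^{-1}V(Ry)$, so $\|W\|_\infty\le1$. The point of this choice of $h$ is that $h^{-4/3}=(\sqrt{C_V}\,R)^{4/3}=C_V^{2/3}R^{4/3}$ and $h^{2/3}=C_V^{-1/3}R^{-2/3}$, so the exponent of the Carleman weight $\e^{2\phi/h^{4/3}}$ is exactly the one appearing in \eqref{RL1}. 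Also $\int|\tilde u|^2\,dy=R^{-d}\|u\|_2^2=R^{-d}$, $\int_{B(0,R)}|u|^2\,dx=R^d\int_{B(0,1)}|\tilde u|^2\,dy$, and the annulus $B(0,R,R+1)$ corresponds to $\mathcal A\defeq\{\,1\le|y|\le1+1/R\,\}$. Since $\Delta u=Vu\in L^2(\R^d)$ we have $u\in H^2(\R^d)$, hence $\tilde u\in H^2(\R^d)$.

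\textbf{The Carleman estimate.} As $V$ need not be compactly supported we truncate: set $\hat W\defeq\mathbf 1_{B(0,2)}W\in L^{\infty}_{\comp}$, $\|\hat W\|_\infty\le1$, and apply Lemma~\ref{lem:3.1} with this potential, with $\lambda=0$ (take the interval there to be e.g. $[-1,1]$), with support radius $2$ and Carleman ball $B(0,5)$. Inspecting its proof, the weight $\phi$ depends only on these (fixed) radii and on $d$, not on the potential, and $C,h_0$ may be taken uniform over potentials bounded by $1$ with support in $B(0,2)$; moreover, the weight being bounded on $B(0,5)$, the estimate extends by density to $v\in H^2$ with support in $B(0,5)$. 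Choose $\chi\in\mathcal C_c^\infty(B(0,1+\tfrac2{3R}))$ with $\chi\equiv1$ on $B(0,1+\tfrac1{3R})$, $0\le\chi\le1$, $|\partial^\alpha\chi|\lesssim R^{|\alpha|}$; for $R\ge1$ its support lies in $B(0,2)$ and its transition set $\mathcal A'\defeq\{\,1+\tfrac1{3R}\le|y|\le1+\tfrac2{3R}\,\}$ lies inside $\mathcal A$. Since $P_{\hat W}$ and $P_W$ agree on functions supported in $B(0,2)$ and $P_W\tilde u=0$, applying Lemma~\ref{lem:3.1} to $v=\chi\tilde u$ gives
\begin{equation*}
  \int\e^{2\phi/h^{4/3}}\bigl(|\chi\tilde u|^2+|h\nabla(\chi\tilde u)|^2\bigr)\,dy
  \le\frac{C}{h^{2/3}}\int_{\mathcal A'}\e^{2\phi/h^{4/3}}\,\bigl|\,[P_W,\chi]\tilde u\,\bigr|^2\,dy,
\end{equation*}
where $[P_W,\chi]\tilde u=-h^2\bigl(2\nabla\chi\cdot\nabla\tilde u+(\Delta\chi)\tilde u\bigr)$ is supported in $\mathcal A'$.

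\textbf{Estimating the two sides and concluding.} On the left, restrict to $B(0,1)\subset\{\chi\equiv1\}$, bound $\e^{2\phi/h^{4/3}}\ge\e^{2\phi_-/h^{4/3}}$ there with $\phi_-\defeq\inf_{B(0,1)}\phi$, and use $\int_{B(0,1)}|\tilde u|^2\,dy=R^{-d}\int_{B(0,R)}|u|^2\,dx\ge\tfrac12R^{-d}$ once $R$ is large enough (possible since $\int_{|x|>R}|u|^2\to0$): the left side is thus $\ge\tfrac12R^{-d}\e^{2\phi_-/h^{4/3}}$. On the right, $|[P_W,\chi]\tilde u|^2\lesssim h^4R^2|\nabla\tilde u|^2+h^4R^4|\tilde u|^2=(h^2R^2)|h\nabla\tilde u|^2+(h^2R^2)^2|\tilde u|^2\lesssim_{C_V}|h\nabla\tilde u|^2+|\tilde u|^2$ on $\mathcal A'$ because $h^2R^2=C_V^{-1}$; bounding $\e^{2\phi/h^{4/3}}\le\e^{2\phi_+/h^{4/3}}$ on $\mathcal A'\subset B(0,2)$ with $\phi_+\defeq\sup_{B(0,2)}\phi$, and removing the gradient term via a Caccioppoli inequality for $P_W\tilde u=0$ on the pair $\mathcal A'\subset\mathcal A$ (using $\|W\|_\infty\le1$), the right side is $\lesssim_{C_V}h^{-2/3}\e^{2\phi_+/h^{4/3}}\int_{\mathcal A}|\tilde u|^2\,dy$. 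Combining, rearranging, and substituting $h^{-4/3}=C_V^{2/3}R^{4/3}$, $h^{2/3}=C_V^{-1/3}R^{-2/3}$ yields
\begin{equation*}
  \int_{\mathcal A}|\tilde u|^2\,dy\ \gtrsim_{C_V}\ R^{-2/3-d}\,\e^{-2(\phi_+-\phi_-)\,C_V^{2/3}R^{4/3}};
\end{equation*}
multiplying by $R^d$ to return to $u$, and using $R^{-2/3}\ge R^{-4/3}$ for $R\ge1$, gives \eqref{RL1} with $M$ the maximum of $2(\phi_+-\phi_-)$ and the implicit $C_V$-dependent constant. The bound holds for $R\ge R_*$, with $R_*$ determined by $h\le h_0$ (i.e. $R\ge(h_0\sqrt{C_V})^{-1}$), by $R\ge1$, and by the ($u$-dependent) threshold making $\int_{B(0,R)}|u|^2\ge\tfrac12$.

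\textbf{Main obstacle.} Beyond Lemma~\ref{lem:3.1}, which does the real work, nothing is hard; the care goes into the bookkeeping of the rescaling — especially the choice $h=(\sqrt{C_V}\,R)^{-1}$, which is precisely what turns the $h^{-4/3}$ of the Carleman weight into the $\|V\|_\infty^{2/3}R^{4/3}$ of \eqref{RL1} — and into the fact that, since $V$ is merely bounded (not compactly supported), one must truncate the rescaled potential and invoke that the weight $\phi$ and the constants of Lemma~\ref{lem:3.1} are uniform over bounded potentials supported in a fixed ball. The Caccioppoli step, needed to pass from $|h\nabla(\chi\tilde u)|^2$ to the plain $L^2$ integral over the annulus in \eqref{RL1}, is routine.
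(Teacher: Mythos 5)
Your proof is correct and follows essentially the same route as the paper: rescale to a unit-ball geometry, cut off near the unit sphere, apply Lemma \ref{lem:3.1} to the truncated function so that only the commutator term (supported in the annulus) survives on the right, and handle the gradient there by a Caccioppoli-type argument. The only (cosmetic) difference is bookkeeping: the paper takes $h=1/R$ and extracts $\|V\|_{\infty}^{2/3}$ from the potential-dependence of the Carleman weight $\phi=\|V\|_{\infty}^{2/3}\phi_0$, whereas you take $h=(\sqrt{C_V}\,R)^{-1}$ to normalize the potential and read $C_V^{2/3}$ off $h^{-4/3}$.
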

If in Lemma \ref{lem:1.1} we had a weight $\exp(2\phi h^{-4/3+})$,
then this would imply a corresponding lower bound
$\exp(-M \|V\|_{\infty}^{2/3} R^{4/3-})$ in \eqref{RL1} which would be
in contradiction with Meshkov's counter example to Landis' conjecture.
\par Let us remark that Bourgain and Kenig \cite{BoKe05} proved the
following more local estimate for $u$, a solution to \eqref{RL0},
\begin{equation}\label{RL2}
  \int_{B(j,1)}|u(x)|^2 dx \geq C \e^{-c|j|^{4/3}\log|j|}, \quad \text{for } |j|\to +\infty.
\end{equation}
The lower bound \eqref{RL1} is a slight improvement over \eqref{RL2}
since we loose the logarithm yet we pay the price of taking averages
in a large annulus rather than in a small ball.
\par In a series of works by Naki\'c, T{\'a}ufer, Tautenhahn and Veseli\'c
\cite{NTTV15,NTTV18} a scale free unique continuation principal was
proven. The authors consider an equidistributed sequence of balls
$B(z_j,\delta)$ centered at $z_j\in \R^d$, with $j\in \Z^d$, and of radius
$\delta \in (0,G/2)$, for some $G>0$, so that
$B(z_j,\delta)\Subset (-G/2,G/2)^d +j$. They showed that there exists
a constant $N=N(d)>0$ depending only on the dimension $d$, such that
for all $G>0$, all $\delta \in (0,G/2)$, all equidistributed sequences of
balls as above, all $V\in L^{\infty}(\R^d;\R)$, all $L\in G\N$, any energy
$E_0\geq 0$ and all
$\phi \in \mathrm{ran}(\mathbf{1}_{(-\infty,E_0]}(H|_{\Lambda_L}))$
\begin{equation}\label{RL3}
  \|\phi\|^2_{S_{\delta}\cap \Lambda_L} 
  \geq \left(\frac{\delta}{G}\right)^{N(1+G^{4/3}\|V\|_{\infty}^{2/3}+G\sqrt{E_0})}
  \|\phi\|^2_{ \Lambda_L},
\end{equation}
where $S_{\delta} = \bigcup_{j\in (G\Z)^d}B(z_j,\delta)$ and
$\Lambda_L = (-L/2,L/2)^d$.  This results extends previous results by
Rojas-Molina and Veseli\'c \cite{RMV13}, Combes, Hislop and Klopp
\cite{CHK07} and Klein \cite{Kl13}.
\par
Tautenhahn and Veseli\'c \cite{TV15} extended the above result to
$\psi\in \mathrm{ran}(\mathbf{1}_{I}(H))$, for any interval
$I\subset (-\infty,E_0]$, i.e.
\begin{equation}\label{RL4}
  \|\psi\|^2_{S_{\delta}} 
  \geq \frac{1}{2}\left(\frac{\delta}{G}\right)^{N(1+G^{4/3}(2\|V\|_{\infty}+E_0)^{2/3})}
  \|\psi\|^2_{ \R^d},
\end{equation}
In a recent paper by Borsiv, Tautenhahn and Veseli\'c \cite{BTV17} a
more general scale free unique continuation principal was proven for
second order elliptic differential operators.
\par
It is striking that in the above results the dependence of the
exponent on the potential is only $ \|V\|_{\infty}^{2/3}$. This agrees
very well with our results \eqref{RL1}. However, we do not know
whether this dependence is optimal.
\\
\par
Meshkov's example uses fundamentally that the potential is
complex-valued.  Since Lemma \ref{lem:3.1} below cannot distinguish
between real-valued and complex-valued potentials, we cannot improve
the exponent $h^{-4/3}$ in Theorem \ref{thm:1} with our method in
spite of the fact that, there, the potential is assumed to be
real-valued which is crucial for a flux norm estimate on outgoing
solution in Lemma \ref{lem:Out} below.
\\
\par
Finally, let us remark that Landis' conjecture may still hold true for
real-valued bounded potentials $V$ and real-valued functions $u$. In
fact some recent developments have been made by Davey, Kenig and Wang
\cite{DKW17} in dimension $d=2$.
%
%
%
%
%
%
\subsection{Notation}
Let $\chi_1,\chi_2\in\mathcal{C}_c^{\infty}(\R^d;[0,1])$. When we
write $\chi_1\prec\chi_2$, we mean that $\chi_2\equiv 1 $ in a small
neighborhood of the support of $\chi_1$. We extend this definition in
the obvious way to include indicator functions of open sets.
\par Depending on the context we will denote by $|x |$ norm of $x$ as
a vector in some Banach space or the absolute value of $x$ as a
complex variable. Similarly, we will denote by $(x |y)$ the inner
product of $x,y$ as elements of some vector space.
\subsection*{Acknowledgments}
The second author was supported by the Erwin Schr{\"o}dinger Fellowship
J4039-N35, by the National Science Foundation grant DMS-1500852 and 
by CNRS Momentum. We would like to thank Maciej Zworski and Nicolas 
Burq for a very helpful and encouraging discussion. We would also like to 
thank the anonymous referee for his pertinent remarks to help improve this 
paper. 
\section{Meromorphic continuation of the resolvent}\label{sec:Res}
Let $h\in (0,1]$ be the semiclassical parameter and consider the operator
\begin{equation}\label{eq3.0.3}
  P_V-\lambda^2= -h^2\Delta + V(x;h) -\lambda^2 \quad \text{on }L^2(\R^d), 
\end{equation}
where $V = V(\cdot; h) \in L^{\infty}_{\comp}(\R^d,\R)$ is a bounded
real-valued compactly supported potential which may depend on the
semiclassical parameter $h>0$. We will often suppress the dependence on
$h$ and simply write $V$. We assume
\begin{equation}\label{eq3.2}
  \| V\|_{\infty}  \leq C_V < +\infty
\end{equation}
and that the support of $V$ is contained in the ball
$B(0,R_0)\Subset \R^d$ of radius $R_0>0$,
\begin{equation}\label{eq3.2.1}
  \supp V \subset B(0,R_0)\Subset \R^d, 
\end{equation}
where both constants $C_V>0$ and $R_0>0$ are are independent of $h>0$.
Moreover, we assume that $\lambda$ is in a compact interval $I$ away
from $0$, i.e. we suppose that
\begin{equation}\label{eq3.2.2}
  \lambda \in I=[a,b] \Subset\R \backslash\{0\}.
\end{equation}
Since the potential $V$ is bounded and has compact support, it follows
that the essential spectrum of $P_V$ is given by $[0,+\infty)$ and
that in $(-\infty,0)$ are only isolated eigenvalues of finite
multiplicity.
\par
For let $\lambda \in \C$ with $\Ima \lambda >0$ the resolvent
\begin{equation}\label{mc2}
  R(\lambda)\defeq 
  (P_{V}- \lambda^2)^{-1} : ~ L^2(\R^d) \longrightarrow L^2(\R^d) 
\end{equation}
is a bounded linear operator. In this notation, we find the negative
eigenvalues of $P_{V}$ on $i\R_+$ given by $\lambda_j=i\mu_j$.
\subsection{Holomorphic continuation of the resolvent of the free
  Laplacian $P_0$}
Seen as an operator $L^2_{\comp}(\R^d) \to H^2_{\loc}(\R^d)$, it is
possible to meromorphically continue the resolvent across the real
axis. In the following we will recall some well-known results. We
begin with the meromorphic continuation of the free resolvent
\begin{equation}\label{mc3}
  R_0(\lambda)\defeq 
  (-h^2\Delta- \lambda^2)^{-1} : ~ L^2(\R^d) \longrightarrow L^2(\R^d), 
  \quad \Ima \lambda >0. 
\end{equation}
\begin{thm}\label{thm:Mc1}
  The family of operators
  \begin{equation*}
    R_0(\lambda)=
    (-h^2\Delta- \lambda^2)^{-1} : ~ L^2_{\comp}(\R^d) \longrightarrow H^2_{\loc}(\R^d), 
    \quad \Ima \lambda >0,
  \end{equation*}
  has a holomorphic extension from the upper half-plane
  $\Ima \lambda >0$ to
  \begin{enumerate}
  \item $\lambda \in\C\backslash\{0\}$, when $d=1$,
  \item $\lambda \in\C $, when $d\geq 3$ is odd,
  \item $\lambda$ in the logarithmic (universal) covering space of
    $\C\backslash\{0\}$, when $d\geq 2$ is even.
  \end{enumerate}
  Moreover, for any $\Omega\Subset\C\backslash\{0\}$ and any
  $\chi\in\mathcal{C}_c^{\infty}(\R^d)$ there exist constants
  $C_0,C_1>0$ such that for all $\lambda\in \Omega$ and $h>0$ small
  enough
  \begin{equation}\label{mc3.1}
    \| \chi R_0(\lambda)\chi \|_{L^2\to H^1} \leq C_0\,\e^{C_1/h}.
  \end{equation}
\end{thm}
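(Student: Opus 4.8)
The plan is to split the statement into two parts: the meromorphic (in fact holomorphic) continuation of $R_0(\lambda)$, which is classical and for which I would only indicate the mechanism, and the quantitative bound \eqref{mc3.1}, which is the part requiring a genuine argument. For the continuation statement, I would recall that in odd dimension the free resolvent has an explicit Schwartz kernel built out of the outgoing fundamental solution of the Helmholtz operator; in $d=1$ it is $\frac{i}{2h\lambda}\e^{i|x-y|\lambda/h}$, while for odd $d\geq 3$ one differentiates this in a standard way, producing a kernel entire in $\lambda\in\C$ (respectively in $\C\setminus\{0\}$ for $d=1$ because of the $1/\lambda$). In even dimension one uses the Hankel function $H^{(1)}_{(d-2)/2}(\lambda|x-y|/h)$, which has a logarithmic branch point at $0$, whence the passage to the logarithmic covering space. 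In all cases, for $\chi\in\mathcal{C}^\infty_c$, the operator $\chi R_0(\lambda)\chi$ has a kernel that is smooth away from the diagonal and has the same local singularity as the free Laplacian resolvent, so it maps $L^2_{\comp}\to H^2_{\loc}$ and depends holomorphically on $\lambda$ in the stated region by Morera's theorem applied to the kernel.

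For the quantitative estimate \eqref{mc3.1}, I would argue by a Carleman-type / a priori estimate rather than through the explicit kernel, since the latter gives polynomial-in-$h$ bounds only on a fixed compact set in $\lambda$ but the exponential $\e^{C_1/h}$ is what one expects once $\Ima\lambda$ is allowed to be nonpositive. Concretely: fix $\Omega\Subset\C\setminus\{0\}$ and $\chi\in\mathcal{C}^\infty_c$, choose $\widetilde\chi\in\mathcal{C}^\infty_c$ with $\chi\prec\widetilde\chi$ supported in a large ball $B(0,R)$, and set $u=R_0(\lambda)\chi v$ with $v\in L^2_{\comp}$. Then $(-h^2\Delta-\lambda^2)u=\chi v$. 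The strategy is to estimate $\widetilde\chi u$ in $H^1$ in terms of $\|\chi v\|_{L^2}$ using a Carleman estimate of exactly the type in Lemma~\ref{lem:1.1} applied to $P_0=-h^2\Delta$ (with $V=0$), combined with an estimate controlling the trace/flux of $u$ on the sphere $\partial B(0,R)$ coming from the outgoing (Sommerfeld) radiation condition satisfied by $u=R_0(\lambda)\chi v$. The exponent in \eqref{mc3.1} is only $h^{-1}$, not $h^{-4/3}$, because for the free operator one does not need the full ball Carleman estimate of Lemma~\ref{lem:1.1}; the relevant Carleman weight can be taken linear in $|x|$ (the standard $\e^{\phi(x)/h}$ with $\phi$ convex and of size $\mO(1)$), yielding the $\e^{C_1/h}$ loss. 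Alternatively, and perhaps most cleanly, one notes that $-h^2\Delta-\lambda^2$ is semiclassically elliptic for $\Rea\lambda$ bounded away from $0$ except near the characteristic set $\{|\xi|=\Rea\lambda\}$, and one combines an interior elliptic estimate with a boundary term that is handled by the outgoing condition and Green's formula; the non-trapping geometry of the free Laplacian makes the constant in the resulting estimate $\e^{C_1/h}$ rather than worse.

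The main obstacle is the low-frequency / boundary behaviour: away from the real axis the operator $-h^2\Delta-\lambda^2$ is invertible on $L^2(\R^d)$, but as $\Ima\lambda\downarrow 0$ one must feed in the radiation condition to control the solution on the annulus $R_0<|x|<R$ and to justify that $\widetilde\chi u$ is the object that the Carleman estimate sees, i.e. that the commutator terms $[\,-h^2\Delta,\widetilde\chi\,]u$ supported in $\{|x|\sim R\}$ do not destroy the bound. I would dispatch this by using that on $\{|x|>R_0\}$ the function $u$ solves the free Helmholtz equation exactly and is outgoing, so a standard exterior estimate (e.g. via the explicit outgoing kernel restricted to $|x|>R_0$, or via a Rellich-type uniqueness argument) bounds $\|u\|_{H^1(B(0,R)\setminus B(0,R_0))}$ by $C_0\e^{C_1/h}\|\chi v\|_{L^2}$; then gluing with the Carleman estimate on $B(0,R_0+\varepsilon)$ closes the argument. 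For $h$ not small one gets the result directly from the explicit kernel on the compact set $\Omega$, so the ``$h>0$ small enough'' hypothesis is only used to make the exponential dominate the polynomial corrections.
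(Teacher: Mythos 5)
The paper does not prove Theorem~\ref{thm:Mc1} itself; it cites \cite[Section 2.1]{Sj02ln} and \cite[Theorem 3.1]{DZ17+}, where the result is obtained from the explicit Schwartz kernel of the free resolvent. Your description of the continuation part matches that standard route and is fine. The problem is your treatment of the quantitative bound \eqref{mc3.1}, where you discard the kernel approach on the grounds that it ``gives polynomial-in-$h$ bounds only on a fixed compact set in $\lambda$'' and replace it by a Carleman estimate coupled with the outgoing radiation condition. Both halves of that move are wrong. First, the explicit kernel gives exactly the exponential bound: writing $-h^2\Delta-\lambda^2=h^2(-\Delta-(\lambda/h)^2)$, the kernel of $\chi R_0(\lambda)\chi$ is $h^{-2}\chi(x)G_{\lambda/h}(x-y)\chi(y)$ with $|G_\mu(z)|$ controlled, away from its integrable diagonal singularity, by $\e^{(\Ima\mu)_-|z|}$ times powers of $|\mu|\,|z|$; since $|x-y|$ is bounded by the diameter of $\supp\chi$ and $\lambda$ ranges over a compact set, Schur's test yields $\|\chi R_0(\lambda)\chi\|_{L^2\to L^2}\leq C_0\e^{C_1/h}$, and one derivative of the kernel (or one integration by parts) upgrades this to $L^2\to H^1$. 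This is the proof in the cited references, and it is uniform over all of $\Omega$.

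Second, and more seriously, your substitute argument has a genuine gap precisely where the estimate is nontrivial, namely for $\lambda\in\Omega$ with $\Ima\lambda<0$. The flux/outgoing-condition estimates you want to invoke (the analogue of Lemma~\ref{lem:Out}, resting on \cite[Proposition 2.2]{Bu98}) control boundary terms by $\Ima(v\,|\,u)$ and require $\lambda$ real, or at least $\Ima(\lambda^2)$ of a favorable sign: Green's identity produces a volume term $\Ima(\lambda^2)\|u\|^2$ whose sign works against you in the lower half-plane, and the continued outgoing solution $R_0(\lambda)\chi v$ grows exponentially at infinity there, so the exterior $H^1$ bound on the annulus that you need to close the Carleman argument is exactly what you cannot get from a radiation condition. (This is why, in the paper, the passage from the real axis to a complex neighborhood is done by a separate Neumann-series perturbation argument in Section 4, not by rerunning the Carleman/flux machinery at complex $\lambda$.) Your proposal would at best reprove \eqref{mc3.1} for $\lambda$ real or with $\Ima\lambda\geq 0$; for the full statement you should revert to the explicit kernel, which is both simpler and actually covers all of $\Omega$.
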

\begin{proof}
  See for instance \cite[Section 2.1]{Sj02ln}, \cite[Theorem 3.1]{DZ17+}.
\end{proof}
In dimension $d=1$ the free resolvent $R_0(\lambda)$ has a simple pole
at $\lambda=0$.  It can be extended meromorphically to the
entire plane $\C$. However, in this paper we will be interested in
energies away from $0$, therefore we will not need this particular
result.
\subsection{Meromorphic continuation of the resolvent of $P_V$}
When adding a bounded potential $V$ with compact support we can no
longer extend the resolvent $R(\lambda)$ holomorphically since poles
appear. More precisely, we have the following result.
\begin{thm}\label{thm:Mc2}
  The family of operators
  \begin{equation*}
    R_V(\lambda)\defeq 
    (-h^2\Delta + V- \lambda^2)^{-1} : ~ L^2_{\comp}(\R^d) \longrightarrow H^2_{\loc}(\R^d), 
    \quad \Ima \lambda >0,
  \end{equation*}
  has a meromorphic extension from the upper half-plane
  $\Ima \lambda >0$ to
  \begin{enumerate}
  \item $\lambda \in\C\backslash\{0\}$, when $d=1$,
  \item $\lambda \in\C $, when $d\geq 3$ is odd,
  \item $\lambda$ in the logarithmic (universal) covering space of
    $\C\backslash\{0\}$, when $d\geq 2$ is even.
  \end{enumerate}
\end{thm}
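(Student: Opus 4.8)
The plan is to deduce the meromorphic continuation of $R_V(\lambda)$ from that of the free resolvent $R_0(\lambda)$ (Theorem \ref{thm:Mc1}) by a standard analytic Fredholm argument, exploiting that $V$ is bounded with compact support. First I would fix a cut-off $\chi\in\mathcal{C}_c^\infty(\R^d)$ with $0\le\chi\le 1$ and $\chi\equiv 1$ on a neighbourhood of $\supp V\subset B(0,R_0)$, so that $V=\chi V\chi$ (indeed $V=V\chi=\chi V$). For $\Ima\lambda>0$ one has the algebraic identity
\begin{equation*}
  (P_V-\lambda^2) = (P_0-\lambda^2)\bigl(I + R_0(\lambda) V\bigr) = (P_0-\lambda^2)\bigl(I + R_0(\lambda)\chi V\chi\bigr),
\end{equation*}
valid on $L^2$, so that $R_V(\lambda) = (I+R_0(\lambda)V)^{-1}R_0(\lambda)$ whenever the middle factor is invertible. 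The point is that $K(\lambda)\defeq R_0(\lambda)\chi\, V\,\chi = \bigl(\chi R_0(\lambda)\chi\bigr) V$ (using $\chi R_0\chi$ to make sense of the continuation) extends, by Theorem \ref{thm:Mc1}, to a holomorphic family of operators on $L^2(\R^d)$ (or on the stated covering space in the even-dimensional case) with values in the compact operators: compactness follows because $\chi R_0(\lambda)\chi: L^2\to H^2$ maps into $H^2$ functions supported in a fixed compact set, and the inclusion $H^2(B)\hookrightarrow L^2(B)$ is compact by Rellich–Kondrachov, while $V$ acts boundedly on $L^2$.

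Next I would invoke the analytic Fredholm theorem: for a connected open set (respectively the logarithmic covering space $\widetilde{\C\setminus\{0\}}$ when $d\ge 2$ even, or $\C$ when $d\ge 3$ odd, or $\C\setminus\{0\}$ when $d=1$) and a holomorphic family $\lambda\mapsto K(\lambda)$ of compact operators, either $I+K(\lambda)$ is nowhere invertible, or $(I+K(\lambda))^{-1}$ is a meromorphic family of operators with poles of finite rank. Since for $\Ima\lambda$ large and positive $\|K(\lambda)\|<1$ (because $R_0(\lambda)$ is $O(1/\Ima\lambda)$ in operator norm on $L^2$ there, or at least small for $\Ima\lambda\gg 1$), the first alternative is excluded, so $(I+R_0(\lambda)V)^{-1}$ is meromorphic on the relevant space. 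Therefore
\begin{equation*}
  R_V(\lambda) = \bigl(I + R_0(\lambda)V\bigr)^{-1} R_0(\lambda): L^2_{\comp}(\R^d)\longrightarrow H^2_{\loc}(\R^d)
\end{equation*}
is meromorphic there, agreeing with the resolvent for $\Ima\lambda>0$; one checks via the resolvent identity that this continuation is independent of the choice of $\chi$.

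The only genuinely delicate point is bookkeeping on the covering space in the even-dimensional case: one must make sure that $R_0(\lambda)$, and hence $K(\lambda)$, is well defined as a holomorphic family on $\widetilde{\C\setminus\{0\}}$ (this is exactly the content of Theorem \ref{thm:Mc1}(3)), that connectedness of that space is used correctly, and that the meromorphic inverse patches consistently across sheets; the compactness and invertibility-at-infinity inputs are routine. I expect this connectedness/covering-space bookkeeping — together with verifying that multiplication by $V\in L^\infty_{\comp}$ (rather than a smooth compactly supported function) still produces a compact $K(\lambda)$, which it does since the Rellich argument only needs $V$ bounded — to be the main thing to state carefully; the rest follows the well-known scheme in \cite{Sj02ln,DZ17+}, to which I would also refer for the proof of Theorem \ref{thm:Mc1}.
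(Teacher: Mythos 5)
Your overall strategy is the standard one and is essentially what the cited references (and the paper itself, in Section~4, with $Q(\mu)=VR_0(\mu)$) carry out: a second–resolvent-identity factorization, compactness of the cut-off free resolvent via Rellich--Kondrachov, invertibility of the Fredholm factor for $\Ima\lambda\gg1$, and the analytic Fredholm theorem on the relevant (connected) domain or covering space. Note that the paper's ``proof'' of this theorem is only a citation of \cite{Sj02ln,DZ17+}, so matching those references is the right target. However, there is one genuine error in your write-up: the identity $R_0(\lambda)\chi V\chi=\bigl(\chi R_0(\lambda)\chi\bigr)V$ is false. Since $\chi V\chi=V$, the left-hand side is $R_0(\lambda)V$, while the right-hand side is $\chi R_0(\lambda)V$; they differ by the outer cut-off. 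This is not cosmetic: after continuation $R_0(\lambda)$ exists only as an operator $L^2_{\comp}\to H^2_{\loc}$ and is \emph{not} bounded on $L^2(\R^d)$, so $I+R_0(\lambda)V$ is not a ``identity plus compact'' family on $L^2$ to which analytic Fredholm theory applies. What the theorem actually inverts meromorphically is $I+\chi R_0(\lambda)\chi V$ (equivalently $I+VR_0(\lambda)\chi$) on $L^2$, and your concluding formula $R_V(\lambda)=(I+R_0(\lambda)V)^{-1}R_0(\lambda)$ therefore does not follow directly from the Fredholm step.

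The repair is short but must be made explicit. Having inverted $I+\chi R_0(\lambda)\chi V$ meromorphically, set for $f\in L^2_{\comp}$ (with $\chi\equiv1$ on $\supp V$ and on $\supp f$)
\begin{equation*}
  R_V(\lambda)f\defeq R_0(\lambda)f-R_0(\lambda)\,V\bigl(I+\chi R_0(\lambda)\chi V\bigr)^{-1}\chi R_0(\lambda)f,
\end{equation*}
which is well defined because $V(\cdots)$ is compactly supported; one checks that $(P_V-\lambda^2)R_V(\lambda)f=f$, that this agrees with the resolvent for $\Ima\lambda>0$, and that it is independent of $\chi$. Equivalently, one can use the device the paper employs in Section~4: with $Q=VR_0$, factor $1+Q=(1+Q(1-\chi))(1+Q\chi)$ and observe that $Q(1-\chi)$ is nilpotent because $(1-\chi)V=0$, so only the genuinely compact piece $Q\chi$ requires Fredholm theory. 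The remaining ingredients of your proposal --- compactness of $\chi R_0(\lambda)\chi$ for merely bounded $V$, smallness of the perturbation for $\Ima\lambda\gg1$, and the bookkeeping on the logarithmic cover in even dimensions --- are correctly identified.
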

\begin{proof}
  See for instance \cite[Proposition 2.1]{Sj02ln}, \cite[Theorem 3.6]{DZ17+}.
\end{proof}
By definition, resonances or scattering poles of $P_V$ are the poles
of this extension with exception of the $L^2$ eigenvalues of $P_V$ at
$\lambda = i\mu_j$.
\\
\par
Let $\Omega$ be an open set in $\C$ or in a covering surface over some
open set in $\C$. Then we say that a function $\Omega \ni z \mapsto P(z)$
with values in the space of linear operators $L^2_{\comp}\to H^2_{\loc}$
is \textit{holomorphic} if $\chi_1P(z)\chi_2$ is holomorphic as a
function with values in the space of bounded linear operators
$L^2\to H^2$, for all $\chi_j\in\mathcal{C}_c^{\infty}$.
\par
Correspondingly, we say that a function $\Omega \ni z \mapsto P(z)$ with
values in the space of linear operators $L^2_{\comp}\to H^2_{\loc}$ is
\textit{meromorphic} if it is holomorphic on $\Omega\backslash S$,
where $S$ is a discrete subset of $\Omega$, and such that if $z_0\in S$,
then near $z_0$ we have
\begin{equation*}
  P(z) = \sum_{j=1}^N\frac{A_j}{(z-z_0)^j} + B(z)
\end{equation*}
where $N$ is finite, $B(z)$ is a holomorphic function with values in
the space of linear operators $L^2_{\comp}\to H^2_{\loc}$ and
$A_j:~L^2_{\comp}\to H^2_{\loc}$ are of finite rank and continuous, in
the sense that $\chi_1A_j \chi_2$ is bounded for all
$\chi_j\in\mathcal{C}_c^{\infty}$.
\subsection{Absence of resonances on the real axis}
We end this section by recalling the following result.
\begin{prop}
  $P_V$ has no resonances in $\R\backslash\{0\}$.
\end{prop}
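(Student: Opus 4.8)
The plan is to show that if $\lambda_0 \in \R \setminus \{0\}$ were a resonance of $P_V$, then the associated resonant state would be a nonzero $L^2$ eigenfunction of $P_V$ at energy $\lambda_0^2 > 0$, which is impossible since $P_V$ has no positive eigenvalues. First I would recall that $\lambda_0$ being a pole of the meromorphic continuation $R_V(\lambda)$ means there is a nonzero "resonant state" $u \in H^2_{\loc}(\R^d)$ solving $(P_V - \lambda_0^2)u = 0$ which is \emph{outgoing} in the sense that outside $B(0,R_0)$ it coincides with $R_0(\lambda_0)f$ for some $f$ (equivalently, it is obtained by applying the continued free resolvent to a compactly supported function). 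Concretely, one extracts $u$ from the principal part of the Laurent expansion of $R_V(\lambda)$ at $\lambda_0$: writing $R_V(\lambda) = \sum_{j=1}^N A_j (\lambda-\lambda_0)^{-j} + B(\lambda)$, the range of $A_N$ consists of such outgoing solutions.

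The key step is then to prove that an outgoing solution $u$ at a \emph{real} energy $\lambda_0^2>0$ must in fact decay and lie in $L^2$. Outside $\supp V$, $u$ solves $(-h^2\Delta - \lambda_0^2)u = 0$, so by elliptic regularity $u$ is smooth there, and by the outgoing (Sommerfeld radiation) condition combined with the explicit form of the free resolvent kernel, $u$ admits an expansion in spherical harmonics whose radial parts are (combinations of) outgoing Hankel functions $H^{(1)}_\nu(\lambda_0 r/h)$. A Rellich-type uniqueness/decay argument — computing the flux $\Ima \int_{\partial B(0,R)} \overline{u}\, \partial_r u \, dS$ and using that $V$ is real-valued so that $\Ima \langle (P_V-\lambda_0^2)u, u\rangle = 0$ forces this boundary flux to vanish — shows that the coefficient of each outgoing term must vanish, hence $u$ decays rapidly (in fact has compact support by unique continuation once it decays), so $u \in L^2(\R^d)$.

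Having shown $u \in L^2(\R^d)$, we conclude: $u$ is then a genuine $L^2$ eigenfunction of the self-adjoint operator $P_V$ with eigenvalue $\lambda_0^2 \in (0,+\infty)$. But as noted just before this section (the essential spectrum of $P_V$ is $[0,+\infty)$ with only isolated negative eigenvalues of finite multiplicity, and there are no embedded eigenvalues for such short-range $V$ by the Rellich–Kato argument / Agmon's unique continuation at infinity), $P_V$ has no eigenvalues in $(0,+\infty)$. This contradiction shows $\lambda_0$ cannot be a resonance, and since by definition resonances exclude the eigenvalues on $i\R_+$, and real $\lambda_0 \neq 0$ cannot be such an eigenvalue either, we are done.

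The main obstacle is the second step: carefully justifying that a real-energy outgoing solution is $L^2$. This is exactly the classical Rellich–Vekua lemma, and the delicate points are (i) making the asymptotic expansion of $u$ outside the support of $V$ rigorous with only $V \in L^\infty_{\comp}$ (which is fine, since outside $\supp V$ the equation is just the free Helmholtz equation and $u$ is automatically smooth there), and (ii) running the flux argument cleanly — integrating by parts on $B(0,R) \setminus B(0,R_0)$, using $\overline{V} = V$, and letting $R \to \infty$ to kill the outgoing Hankel contributions. One then invokes unique continuation from infinity to upgrade "rapid decay" to "compactly supported", so the final application of absence of positive eigenvalues is immediate.
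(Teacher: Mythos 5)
Your overall route---extract a nonzero outgoing resonant state $u=R_0(\lambda_0)w$ from the Laurent expansion at a putative real pole, run the Rellich flux argument using that $V$ is real-valued to kill the outgoing Hankel coefficients, and conclude that $u$ vanishes outside a compact set---is exactly the standard argument the paper invokes (it simply points to \cite[Theorem 2.4]{Sj02ln} and \cite[Theorem 3.30]{DZ17+}). The problem is your closing step. Once $u$ is known to vanish outside a compact set, the correct and direct conclusion is $u\equiv 0$ by \emph{unique continuation}, contradicting nontriviality of the resonant state. Instead you detour through ``$u$ is an $L^2$ eigenfunction at positive energy, and $P_V$ has no embedded positive eigenvalues by Rellich--Kato/Agmon, so we are done immediately.'' For $V\in L^{\infty}_{\comp}$ the absence of positive eigenvalues is itself proved by precisely the argument you are in the middle of (Rellich's theorem plus unique continuation), so citing it as a black box either begs the question or silently imports the missing step; moreover the classical Kato/Agmon theorems you name assume more regularity or decay of $V$ than mere boundedness, so they do not apply off the shelf here.

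The genuinely nonroutine ingredient---and the reason the paper records this proposition with its own proof sketch rather than a bare citation---is that unique continuation for a merely bounded potential must be supplied by a Carleman estimate valid for $L^{\infty}$ perturbations, which is exactly what Lemma \ref{lem:3.1} provides; the paper's proof explicitly says the standard argument is to be run ``using Lemma \ref{lem:3.1}''. Your proposal mentions unique continuation twice but never identifies where it comes from for $L^{\infty}$ potentials. If you replace your final paragraph by: ``$u$ vanishes outside some ball, hence by the Carleman estimate of Lemma \ref{lem:3.1} (applied as a quantitative unique continuation statement on larger balls) $u\equiv 0$, contradicting that $u$ is a nontrivial resonant state,'' the proof is complete and coincides with the paper's.
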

\begin{proof}
  The proof is based on the fact that $\lambda_0$ is a resonance of
  $P_V$ if and only if there exists a solution $u$ to
  $(P_V-\lambda_0^2)u =0$ of the form $u=R_0(\lambda_0)w$ for some
  $w\in L^2_{\comp}$. Such solutions are called outgoing solutions.  This
  is then combined with the Paley-Wiener theorem and the Carleman
  estimate in Lemma \ref{lem:3.1} below to conclude the result. One can 
  follow line by line (using Lemma \ref{lem:3.1}) the standard proof which 
  can be found for instance in \cite[Theorem 2.4]{Sj02ln}, 
  \cite[Theorem 3.30]{DZ17+}.
\end{proof}
\section{Resolvent estimate}\label{sec:NoRes}
In this section we will present the proof of Theorem \ref{thm:1}. The
global strategy of this proof was inspired by the approach to Carleman
estimates in \cite[Section 4]{Sj02ln}.
\subsection{Local Carleman estimate in a ball}
From now on we suppose that $d\geq 2$ and we work under the assumption
\eqref{eq3.2} and \eqref{eq3.2.1}. The first step in the proof of
Theorem \ref{thm:1} is to give a local Carleman estimate in a ball.
\begin{lem}\label{lem:3.1}
  Let $I\Subset \R$ be a compact interval. Then, for any $R>0$, there
  exists a real-valued smooth function
  $\phi \in\mathcal{C}^{\infty}(\R^d)$ and constants $C>0$ and
  $h_0\in (0,1]$, such that for all $u\in\mathcal{C}_c^{\infty}(B(0,R))$,
  all $\lambda\in I$ and all $0<h\leq h_0$
  \begin{equation*}
    \int \e^{2\phi/h^{4/3}}(|u|^2 + |h\nabla u|^2 ) dx 
    \leq \frac{C}{h^{2/3}}
    \int \e^{2\phi/h^{4/3}}| (P_V - \lambda^2)u|^2 dx. 
  \end{equation*}	
\end{lem}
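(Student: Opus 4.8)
The plan is to establish this via the standard conjugation-and-positive-commutator method, tracking powers of $h$ carefully to see where the $h^{-4/3}$ scale appears. First I would choose the weight: set $\phi(x) = g(|x|)$ for a suitable radial convex function — for instance something like $\phi(x) = \tfrac{1}{2}(|x|+K)^2$ on $B(0,R)$ (cut off smoothly outside), or more flexibly $\phi$ with $\nabla\phi$ nonvanishing and with a convexity property $\langle \phi''(x) \xi, \xi\rangle \geq c|\xi|^2$. Write $v = \e^{\phi/h^{4/3}} u$ and introduce the conjugated operator
\begin{equation*}
  P_\phi = \e^{\phi/h^{4/3}} (-h^2\Delta - \lambda^2)\, \e^{-\phi/h^{4/3}},
\end{equation*}
so that $P_\phi = -h^2\Delta + 2 h^2 h^{-4/3} \nabla\phi\cdot\nabla + (\text{zeroth order})$; the effective large parameter is $\tau = h^{-4/3}$ while the semiclassical parameter is $h$, and the natural rescaling $h^2\tau^2 = h^2 h^{-8/3} = h^{-2/3}$ is what produces the gain $h^{-2/3}$ on the right-hand side. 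I would split $P_\phi = A + iB$ into its formally self-adjoint and skew-adjoint parts,
\begin{equation*}
  A = -h^2\Delta - \lambda^2 + h^{-2/3}|\nabla\phi|^2, \qquad iB = \tfrac{h^{2}}{h^{4/3}}(\nabla\phi\cdot\nabla + \nabla\cdot\nabla\phi) = h^{2/3}(\nabla\phi\cdot\nabla + \nabla\cdot\nabla\phi),
\end{equation*}
and expand $\|P_\phi v\|^2 = \|Av\|^2 + \|Bv\|^2 + (i[A,B]v \mid v)$.

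The heart of the matter is the commutator term: a computation gives $i[A,B] = h^{2/3}\cdot h^{2} \cdot (\text{second-order operator with principal symbol } 4\langle \phi'' \nabla, \nabla\rangle) + h^{2/3}h^{-2/3}(\text{lower order from }|\nabla\phi|^2)= $ a sum whose leading contributions are $4 h^{8/3}\langle \phi''(x) hD, hD\rangle/h^2$-type terms plus a term like $h^{2/3} \cdot \tfrac{2}{h^{4/3}}\nabla\phi\cdot\nabla(|\nabla\phi|^2)$. Choosing $\phi$ convex so that both $\phi'' \geq cI$ and the Hörmander-type subellipticity condition holds (positivity of the full symbol of $i[A,B]$ on the characteristic set of $A$), one gets a lower bound
\begin{equation*}
  (i[A,B]v\mid v) + \|Av\|^2 + \|Bv\|^2 \gtrsim h^{2/3}\bigl( \|hv\nabla\|^2 \text{-type terms} \bigr) + \text{(bulk term)} \gtrsim h^{2/3}\bigl(\|v\|^2 + \|h\nabla v\|^2\bigr),
\end{equation*}
after absorbing the non-elliptic directions using $\|Av\|^2$ to control $|hD|$ away from $|\xi|^2 = \lambda^2 + h^{-2/3}|\nabla\phi|^2$ and using the commutator positivity near it. Then I would undo the conjugation: $v\nabla = \e^{\phi/h^{4/3}}(\nabla u + h^{-4/3}(\nabla\phi) u)$, so $\|h\nabla v\|^2$ and $\|v\|^2$ control $\int \e^{2\phi/h^{4/3}}(|h\nabla u|^2 + |u|^2)\,dx$ up to the $h^{-4/3}$-weighted cross term, which is lower order relative to the $h^{-2/3}$ gain for $h$ small. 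Finally, the potential: since $V \in L^\infty$, $\| \e^{\phi/h^{4/3}} V u\|^2 \leq C_V^2 \int \e^{2\phi/h^{4/3}}|u|^2\,dx$, and for $h$ small the factor $C/h^{2/3}$ on the left of the desired inequality dominates $C C_V^2$, so this term is absorbed into the left-hand side — this is exactly the point where no derivative of $V$ is needed and where real- vs. complex-valued is irrelevant.

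The main obstacle, and the step I would spend the most care on, is verifying the subellipticity/positivity of the symbol of $i[A,B]$ on the characteristic variety of $A$ with the correct power of $h$, since $A$ itself contains the large lower-order term $h^{-2/3}|\nabla\phi|^2$ which shifts the characteristic set and must be handled so that the commutator still wins; equivalently, one must choose $\phi$ (its convexity constant relative to $\sup|\nabla\phi|$, $\inf|\nabla\phi|$, and the size of $R$ and $I$) so that the Hörmander bracket condition $\{A,B\} > 0$ on $\{A = 0, B = 0\}$ holds uniformly, and then upgrade the symbolic positivity to an operator inequality via a Gårding-type argument (here elementary since everything is a differential operator with smooth coefficients). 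A secondary technical point is justifying the integrations by parts and boundary-term vanishing, which is immediate because $u \in \mathcal{C}_c^\infty(B(0,R))$, and ensuring all estimates are uniform in $\lambda \in I$, which follows since $I$ is compact and $\lambda$ enters only through the bounded coefficient $-\lambda^2$ in $A$.
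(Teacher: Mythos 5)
Your overall architecture (conjugate by $\e^{\phi/h^{4/3}}$, split the conjugated operator into self-adjoint and skew-adjoint parts, use positivity of the commutator) is the standard machinery behind the free Carleman estimate that the paper simply quotes as \cite[Lemma 4.2]{Sj02ln}; the paper itself does not redo this computation but instead reduces to it by a rescaling. The genuine gap in your plan is the final absorption of the potential. You claim that $\|\e^{\phi/h^{4/3}}Vu\|^2\leq C_V^2\int\e^{2\phi/h^{4/3}}|u|^2\,dx$ can be absorbed ``because the factor $C/h^{2/3}$ dominates $CC_V^2$'', but the factor $h^{-2/3}$ sits on the \emph{right}-hand side of the target inequality. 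Rearranged, the free estimate you propose to prove reads
\begin{equation*}
  h^{2/3}\bigl(\|u\|_w^2+\|h\nabla u\|_w^2\bigr)\;\leq\; C\,\|(P_0-\lambda^2)u\|_w^2
\end{equation*}
in the weighted norms, and passing from $P_0$ to $P_V$ produces an error $C\,C_V^2\|u\|_w^2$ on the right which must be absorbed into $h^{2/3}\|u\|_w^2$ on the left; since $C_V^2\gg h^{2/3}$, this fails rather than succeeds. A gain of only $h^{2/3}$ on the $\|u\|_w^2$ term cannot beat an $O(1)$ potential, so the lower bound $\gtrsim h^{2/3}(\|v\|^2+\|h\nabla v\|^2)$ you aim for is not strong enough for the perturbation step.

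The lemma is nonetheless true, and the missing ingredient is already latent in your commutator: with $\tau=h^{-4/3}$ the contribution $\tau^3h^4\langle\phi''\nabla\phi,\nabla\phi\rangle|v|^2$ has coefficient $\tau^3h^4=1$, so the correct conclusion of the positive-commutator argument is $\|P_{0,\phi}v\|^2\gtrsim c(\phi)\|v\|^2+h^{2/3}\|h\nabla v\|^2$ with $c(\phi)=O(1)$ in $h$ but growing with the steepness of $\phi$; choosing $\phi$ steep enough that $c(\phi)\geq 4\|V-\lambda^2\|_\infty^2$ then permits the absorption. This is exactly what the paper does in a cleaner way: it invokes the free estimate $\widetilde h\int(|v|^2+|\widetilde h\nabla v|^2)\,dx\leq C_0\|\e^{\phi/\widetilde h}(-\widetilde h^2\Delta)\e^{-\phi/\widetilde h}v\|^2$ at the rescaled parameter $\widetilde h=h^{4/3}C_1^{-1/3}$ with $C_1\geq 4C_0\max\{\|V-\lambda^2\|_\infty^2,1\}$, observes that $(h/C_1)^{2/3}(P_V-\lambda^2)=-\widetilde h^2\Delta+(\widetilde h/C_1)^{1/2}(V-\lambda^2)$, so the potential becomes a perturbation of size exactly half the gain $\widetilde h^{1/2}/C_0^{1/2}$, and then returns to the original variables, which replaces $\phi$ by the steeper weight $C_1^{1/3}\phi$. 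You should either adopt this rescaling or redo your bookkeeping to extract the $O(1)$, weight-dependent gain on $\|v\|^2$. (Two minor further points: the sign of the $h^{-2/3}|\nabla\phi|^2$ term in $A$ should be negative, consistent with your own later description of the characteristic set; and a radial weight such as $\tfrac12(|x|+K)^2$ is not smooth at the origin, so one needs the convexified weight with nonvanishing gradient on $B(0,R)$ as in the quoted lemma.)
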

\begin{proof}
  The basic Carleman estimate \cite[Lemma 4.2]{Sj02ln} for the semiclassical
  Laplacian $-\widetilde{h}^2\Delta$ is as follows: Let $R>0$. Then,
  there exists a smooth real-valued function
  $\phi\in\mathcal{C}^{\infty}(\R^d)$, and constants $C_0 >0$ and
  $\widetilde{h}_0\in\,(0,1]$ such that for all
  $v\in\mathcal{C}_c^{\infty}(B(0,R))$ and all
  $0<\widetilde{h}\leq \widetilde{h}_0$
  \begin{equation}\label{eq3.n1}
    \widetilde{h} \int (|v|^2 + |\widetilde{h}\nabla v|^2) dx 
    \leq C_0 \| \e^{\phi/\widetilde{h}} (-\widetilde{h}^2\Delta ) \e^{-\phi/\widetilde{h}}u\|^2.
  \end{equation}	
  Next, let $h\in (0,1]$ and let $C_1>0$ be so that
  $C_1 \geq 4 C_0\max\{ \|V-\lambda^2\|^2_{\infty},1\}$ for all
  $\lambda \in I$. Define
  \begin{equation}\label{eq3.n2}
    P_V(\widetilde{h}) \defeq  - \widetilde{h}^2\Delta +\left(\frac{\widetilde{h}}{C_1}\right)^{1/2}(V- \lambda^2)
  \end{equation}
  with $\widetilde{h}=h^{4/3}C_1^{-1/3}$. Notice that
  \begin{equation}\label{eq3.n3}
    P_V(\widetilde{h}) = \left(\frac{h}{C_1}\right)^{2/3}(P_V - \lambda^2).
  \end{equation}
  Then, by \eqref{eq3.n1}, \eqref{eq3.n2}, we have that for all
  $0<\widetilde{h}\leq \min\{\widetilde{h}_0,C_1^{-1/3}\}$, for all
  $v\in\mathcal{C}_c^{\infty}(B(0,R))$ and any $\lambda \in I$
  \begin{equation}\label{eq3.n4}
    \begin{split}
      \| \e^{\phi/\widetilde{h}} P_V(\widetilde{h})
      \e^{-\phi/\widetilde{h}}v\|_2 &\geq \| \e^{\phi/\widetilde{h}}
      (-\widetilde{h}^2\Delta ) \e^{-\phi/\widetilde{h}}v\|_2
      -\frac{\widetilde{h}^{1/2}}{C_1^{1/2}} \|(V- \lambda^2)v\|_2 \\
      & \geq \frac{\widetilde{h}^{1/2}}{C_0^{1/2}}\left(\int (|v|^2 +
        |\widetilde{h}\nabla v|^2 ) dx\right)^{1/2}
      -\frac{\widetilde{h}^{1/2}}{C_1^{1/2}} \|(V- \lambda^2)\|_{\infty} \|v\|_2 \\
      & \geq\frac{\widetilde{h}^{1/2}}{2C_0^{1/2}}\left(\int (|v|^2 +
        |\widetilde{h}\nabla v|^2 ) dx\right)^{1/2}.
    \end{split}
  \end{equation}	
%
  Setting $u = \e^{\phi/\widetilde{h}} v$, we get by \eqref{eq3.n4}
  \begin{equation}\label{eq3.n5}
    \int \e^{2\phi/\widetilde{h}}(|u|^2 + |\widetilde{h}\nabla u|^2 ) dx 
    \leq \frac{C}{\widetilde{h}}
    \int \e^{2\phi/\widetilde{h}}| P_V(\widetilde{h}) u|^2 dx.
  \end{equation}	
  for some constant $C>0$. Set $\widetilde{\phi} = C_1^{1/3}\phi
  $. Then, by \eqref{eq3.n3}, \eqref{eq3.n5}
  \begin{equation}\label{eq3.n6}
    \int \e^{2\widetilde{\phi}/h^{4/3}}(|u|^2 + |h\nabla u|^2 ) dx 
    \leq \frac{C}{h^{2/3}}
    \int \e^{2\widetilde{\phi}/h^{4/3}}| (P_V - \lambda^2)u|^2 dx, 
  \end{equation}	
  which concludes the proof of the Lemma.
\end{proof}
Next we will get rid of the assumption of compact support on $u$ in
Lemma \ref{lem:3.1}.  Suppose that $R_0<R_1<R_2$, let
$u\in\mathcal{C}^{\infty}(B(0,R_2))$, let
$1_{B(0,R_1)} \prec\chi\in\mathcal{C}^{\infty}_c(B(0,R_2),[0,1])$ and
apply Lemma \ref{lem:3.1} to $\chi u$ to get
\begin{equation}\label{eq3.16}
  \begin{split}
    \int_{B(0,R_1)}\e^{2\phi/h^{4/3}}(|u|^2 + |h\nabla u|^2) dx
    \leq& \frac{C}{h^{2/3}} \int_{B(0,R_2)}\e^{2\phi/h^{4/3}}|(P_V-\lambda)u|^2dx \\
    &+ \frac{C}{h^{2/3}}
    \int_{B(0,R_2)}\e^{2\phi/h^{4/3}}|[-h^2\Delta,\chi] u|^2 dx.
  \end{split}
\end{equation}
We denote by $B(0,R_1,R_2)\subset \R^d$ the open annulus
$B(0,R_2)\backslash \overline{B(0,R_1)}$. Since
\begin{equation*}
  \begin{split}
    |[-h^2\Delta,\chi] u|^2 &= |(-h^2\Delta\chi)u - 2(h\nabla\chi | h\nabla u) |^2 \\
    &\leq C_1 (h^4|u|^2 + h^2 | h\nabla u|^2),
  \end{split}
\end{equation*}
for some constant $C_1>0$, and since $\supp \nabla\chi \subset B(0,R_1,R_2)$,
we obtain from \eqref{eq3.16}
\begin{equation}\label{eq3.17}
  \begin{split}
    \int_{B(0,R_1)}\e^{2\phi/h^{4/3}}(|u|^2 + |h\nabla u|^2) dx
    &\leq \frac{C}{h^{2/3}} \int_{B(0,R_2)}\e^{2\phi/h^{4/3}}|(P_V-\lambda)u|^2dx \\
    &+ CC_1h^{4/3} \int_{B(0,R_1,R_2)}\e^{2\phi/h^{4/3}}(|u|^2 +
    |h\nabla u|^2) dx.
  \end{split}
\end{equation}
\subsection{Carleman estimate in a shell away from the support of the
  potential $V$}
We will begin with the following
\begin{lem}\label{lem:3.2a}
  Let $w=w(r)=r^2$ for $r\geq 0$. Let $I$ be as in \eqref{eq3.2.2} and
  let $\lambda\in I$. Let $A,B>0$ be constants (to be determined later
  on) and set
  \begin{equation}\label{eq3.23.2}
    R_c = R_c(h) = \frac{\sqrt{2A}}{h^{1/3} |\lambda |}. 
  \end{equation}
  Let $h>0$ be small enough so that $2 R_0 \leq R_c$. Then, there exists a
  smooth real-valued function
  $\phi_0\in\mathcal{C}^{\infty}(]0,+\infty[)$ and a constant
  $C_{\phi_0}>0$ (independent of $h>0$) so that
  $0\leq \phi'_0|_{[R_0,+\infty[} \leq C_{\phi_0}$ and
  \begin{equation}\label{eq3.23.3}
    \phi'_0(r) = \begin{cases}
      \left(A r^{-2}-\frac{h^{2/3}\lambda^2}{2}\right)^{1/2}, \quad \text{for} ~ 
      R_0\leq r \leq R_c-2, \\
      {B}^{-1}h^{1/3}, \quad \text{for} ~ r \geq R_c-1.
    \end{cases}
  \end{equation}
  Moreover, there exists a constant $h_0\in(0,1]$ and $C_0>0$,
  depending only on $A,B,I$ and $R_0$, so that for any $0<h\leq h_0$
  \begin{equation}\label{eq3.23.0}
    (w(h^{2/3}\lambda^{2}+ (\phi_0')^2-h^{4/3}\phi_0''))' \geq h^{2/3}\frac{\lambda^2 w'}{C_0}, 
    \quad\text{ for } ~r \geq R_0.
  \end{equation}
\end{lem}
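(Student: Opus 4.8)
The plan is to build $\phi_0$ by prescribing $p:=\phi_0'$ piecewise and then integrating, after reducing the claimed inequality to a pointwise bound on $p,p',p''$. Since $h,\lambda$ are independent of $r$ and $w=r^2$, with $\psi:=h^{2/3}\lambda^2+p^2-h^{4/3}p'$ we have
\begin{equation*}
  (w\psi)'=h^{2/3}\lambda^2\,w'+\bigl(wp^2\bigr)'-h^{4/3}\bigl(wp'\bigr)',\qquad\bigl(wp^2\bigr)'=\bigl((rp)^2\bigr)'=2(rp)\bigl(p+rp'\bigr).
\end{equation*}
On the whole region of interest $r$ will stay $\lesssim h^{-1/3}$ and $|p'|,|p''|\lesssim h^{1/3}$, so $h^{4/3}|(wp')'|=h^{4/3}|2rp'+r^2p''|=O(h)$, negligible against $h^{2/3}\lambda^2w'$ (of size $\asymp h^{1/3}$ near $r\asymp R_c$). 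Hence it is enough to produce a nonnegative smooth $p$, matching the prescribed profile, with $(rp)(p+rp')\ge-(1-\tfrac1{C_0})\,r\,h^{2/3}\lambda^2-O(h)$ for $r\ge R_0$; we will take $C_0$ a fixed constant, for instance $C_0=4$.

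On $[R_0,R_c-2]$ set $p=g^{1/2}$, $g:=Ar^{-2}-\tfrac12 h^{2/3}\lambda^2$; this is legitimate for $h\le h_0$ since then $R_0<R_c-2<R_c$, so $g>0$ and $g^{1/2}$ is smooth. The profile saturates the estimate: $g-Ar^{-2}=-\tfrac12 h^{2/3}\lambda^2$ gives $(rg^{1/2})'=\dfrac{g-Ar^{-2}}{\sqrt g}=-\dfrac{h^{2/3}\lambda^2}{2\sqrt g}$, hence $(rp)(p+rp')=-\tfrac12 h^{2/3}\lambda^2$ (equivalently $wp^2=r^2g=A-\tfrac12 h^{2/3}\lambda^2 r^2$). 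Moreover $wp'=w(g^{1/2})'=-\dfrac{A}{r\sqrt g}$, and a direct computation gives $(wp')'=\dfrac{A}{r^2g^{3/2}}\bigl(g-Ar^{-2}\bigr)=-\dfrac{Ah^{2/3}\lambda^2}{2r^2g^{3/2}}<0$, so there the $h^{4/3}$-term actually helps. Combining, $(w\psi)'=r h^{2/3}\lambda^2+\dfrac{Ah^{2}\lambda^2}{2r^2g^{3/2}}\ge r h^{2/3}\lambda^2=\tfrac12 h^{2/3}\lambda^2 w'$. On $[R_c-1,+\infty)$ the profile $p\equiv B^{-1}h^{1/3}$ is constant, so $p'=p''=0$ and $(w\psi)'=h^{2/3}(\lambda^2+B^{-2})w'\ge h^{2/3}\lambda^2 w'$.

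The real work is the interpolation of $p$ on the unit interval $[R_c-2,R_c-1]$, from $g^{1/2}$ (of size $\asymp h^{1/2}$, with negative slope at $R_c-2$) up to $B^{-1}h^{1/3}$ (of size $\asymp h^{1/3}$); note $g^{1/2}<B^{-1}h^{1/3}$ on this entire interval once $h$ is small, since $R_c-2$ lies beyond the radius $R_*$ — a fixed fraction of $R_c$ — at which $g^{1/2}=B^{-1}h^{1/3}$. The danger is that here $r\asymp R_c\asymp h^{-1/3}$, so $w=r^2\asymp h^{-2/3}$ is large: a naive interpolation (for instance convex-combining the two profiles with a cutoff) puts into $2(rp)(p+rp')$ a contribution of size $\asymp h^{1/6}$ of the wrong sign — larger than the main term $\asymp h^{1/3}$. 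To get around this I work with $f:=rp$ and demand that $f$ agree with $rg^{1/2}$ to infinite order at $R_c-2$, stay $\ge rg^{1/2}$, have a single minimum at some $r_0\in(R_c-2,R_c-1)$, and increase monotonically on $[r_0,R_c-1]$ to meet $r\,B^{-1}h^{1/3}$ smoothly at $R_c-1$. On $[r_0,R_c-1]$, $f,f'\ge 0$, so $(wp^2)'=2ff'\ge 0$ and $(w\psi)'\ge h^{2/3}\lambda^2 w'-O(h)\ge\tfrac14 h^{2/3}\lambda^2 w'$ for $h$ small. On $[R_c-2,r_0]$, where $f'\le 0$, one needs $f|f'|\le\tfrac34 r h^{2/3}\lambda^2$; writing $f=rg^{1/2}(1+\delta)$ with $0\le\delta=O(1)$ there one finds $f|f'|=\tfrac12 r h^{2/3}\lambda^2(1+\delta)^2-(rg^{1/2})^2(1+\delta)\delta'$, and since $rg\asymp h^{2/3}$ on this range this stays $\le\tfrac34 r h^{2/3}\lambda^2$ provided $\delta'$ is not too small (it is automatic while $\delta$ is small) — easily arranged over a unit interval with $\delta$ flat at $0$ at $R_c-2$. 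Finally $0\le p\le C_{\phi_0}$ holds with $C_{\phi_0}$ depending only on $A,B,R_0,I$: $0\le g^{1/2}\le\sqrt A/R_0$ on $[R_0,R_c-2]$, $0\le B^{-1}h^{1/3}\le B^{-1}$ on $[R_c-1,+\infty)$, and on $[R_c-2,R_c-1]$ the function $p=f/r$ is squeezed between its two boundary values. Extending $p$ smoothly to $(0,R_0)$ and setting $\phi_0(r):=\int_{R_0}^r p(s)\,ds$ yields $\phi_0\in\mathcal{C}^\infty((0,+\infty))$ with all the stated properties; the construction of, and the estimate on, the annulus $[R_c-2,R_c-1]$ is the one genuine obstacle.
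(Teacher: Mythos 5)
Your proposal is correct, and in the one place where the lemma is genuinely delicate --- the unit annulus $[R_c-2,R_c-1]$ --- it takes a different route from the paper. The paper never works with $\phi_0'$ directly there: it interpolates at the level of the \emph{square}, setting $\psi=(\phi_0')^2=\psi_0\chi+\psi_1(1-\chi)$ with a fixed decreasing cutoff $\chi$, where $\psi_0=Ar^{-2}-\tfrac12 h^{2/3}\lambda^2$ and $\psi_1=h^{2/3}B^{-2}$. The inequality \eqref{eq3.23.0} is then checked by direct estimation: the cross term $\tfrac r2(\psi_0-\psi_1)\chi'$ is nonnegative because $\psi_0\asymp h<h^{2/3}B^{-2}=\psi_1$ on the annulus while $\chi'\le 0$, the term $\tfrac r2\psi_0'\chi\ge-\tfrac12 h^{2/3}\lambda^2$ eats exactly half of the main term, and $\phi_0''=\psi'/(2\sqrt{\psi})$ and $r\phi_0'''$ are controlled using the lower bound $\psi\gtrsim h$. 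Your reformulation via $f=r\phi_0'$, for which the middle term becomes $\bigl((r\phi_0')^2\bigr)'=2ff'$, instead designs $f$ with a single interior minimum, so that $ff'\ge 0$ on the increasing part and is controlled through $\delta=f/(rg^{1/2})-1$ on the decreasing part; your diagnosis that convex-combining the two profiles of $\phi_0'$ itself fails (the contribution $2(rp)\cdot r(g^{1/2})'\chi\asymp -h^{1/6}$ swamps the main term $h^{2/3}\lambda^2w'\asymp h^{1/3}$) is exactly the right point, and interpolating the squares, as the paper does, is one concrete way of meeting the requirement you isolate. Both arguments close; yours is more transparent about why the transition region works, the paper's is more mechanical but leaves fewer construction details to arrange. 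Two cosmetic slips in yours: on $[R_0,R_c-2]$ the identity should read $(rp)(p+rp')=-\tfrac r2h^{2/3}\lambda^2$ (a factor $r$ is missing, though your displayed formula for $(w\psi)'$ there is correct), and the blanket bound $|p'|,|p''|\lesssim h^{1/3}$ fails near $r=R_0$ where both are $O(1)$ --- harmless, since there $r=O(1)$ so $h^{4/3}|(wp')'|=O(h^{4/3})$ is still negligible, and you treat that region exactly anyway.
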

\begin{proof}
  \textbf{Step 1}. Set $\psi = (\phi'_0)^2$. To simplify the notation
  we will suppose that $\lambda > 0$ and we will work with
  $\widetilde{h}=h^{2/3}$ with $0<h \leq h_0$ for some $h_0\in (0,1]$.  Then,
  \eqref{eq3.23.0} is equivalent to
  \begin{equation}\label{eq3.23}
    G(r)\defeq 
    \psi +\widetilde{h}\lambda^2 -\widetilde{h}^2\phi_0'' 
    +  \frac{r}{2}(\psi' -\widetilde{h}^2\phi_0''') \geq 
    \widetilde{h}\frac{\lambda^2}{C_0}, 
    \quad 	\text{ for } r\geq R_0 .
  \end{equation}
  Let $A>0$, let $h_0\in (0,1]$ be small enough so that
  \begin{equation}\label{Z1}
    2R_0 \leq R_c = \frac{\sqrt{2A}}{ \widetilde{h}^{1/2} \lambda},  
  \end{equation}
  compare with \eqref{eq3.23.2}. Set
  \begin{equation}\label{eq3.23.1}
    \psi_0(r) \defeq \frac{A}{r^{2}}-\frac{\widetilde{h}\lambda^2}{2}, \quad r> 0.
  \end{equation}
  Notice that $\psi_0(r)=0$ precisely at $r=R_c$. Hence, for
  $h_0\in (0,1]$ small enough, $\psi_0\geq 0$ for $0<r\leq R_c$.
  \par
  Next, let $B>0$ and set
  \begin{equation}\label{eq3.24.0}
    \psi_1 \defeq\frac{\widetilde{h}}{B^2}.
  \end{equation}
  Let $\chi\in\mathcal{C}^{\infty}([0,+\infty[;[0,1])$ be so that
  $\chi\equiv 1$ on $[0,R_c-2]$, $\chi\equiv 0$ on $[R_c-1,+\infty[$
  and so that all derivatives of $\chi$ are bounded uniformly in $h$
  (and, thus, $\widetilde{h}$).  We can choose $\chi$ such that
  $\chi' \leq 0$. Then, set
  \begin{equation}\label{eq3.24.1}
    \psi(r) \defeq \psi_0(r)\chi(r) + \psi_1(r)(1-\chi(r)), \quad r>0
  \end{equation}
  Since $\psi_0(R_c)=0$, we have that for $\tau\in[-2,-1]$
  \begin{equation}\label{eq3.24.2}
    \begin{split}
      \psi_0(R_c+\tau) &= -2 A\tau \int_0^1 (R_c+t\tau)^{-3}dt
      =  -2 A\tau R_c^{-3}(1 + \mO(R_c^{-1}))\\
      &= \frac{|\tau| \widetilde{h}^{3/2}\lambda^3}{(2A)^{1/2}} (1 +
      \mO(\widetilde{h}^{1/2}))
    \end{split}
  \end{equation}
  Since $\psi_0$ is a strictly decreasing function, by
  \eqref{eq3.24.1}, \eqref{eq3.24.0}, there exist constants
  $h_0\in (0,1]$ and $C>0$ (depending on A,B and I) such that for all
  $0<h\leq h_0$
  \begin{equation}\label{eq3.24.3}
    \psi(r) \geq \frac{1}{C}\, \widetilde{h}^{3/2}, \quad r>0 
  \end{equation}
  \textbf{Step 2}. We estimate $\phi''_0$. Assume first that
  $R_0 \leq r\leq R_c-2$. Then,
  \begin{equation}\label{eq3.24}
    \phi''_0(r) = \frac{\psi_0'(r)}{2\sqrt{\psi_0(r)}} 
    = \frac{-\sqrt{2}A}{\sqrt{2Ar^{4} - \widetilde{h}\lambda^2r^{6}}}
    \defeq \frac{-\sqrt{2}A}{m(r)^{1/2}} <0.
  \end{equation}
  Notice that $m'(r)= r^3(8A - \widetilde{h} \lambda^2 6r^2)$. Thus,
  $m(r)$ has its unique critical point at
  \begin{equation*}
    r_1= \frac{2\sqrt{A}}{\lambda\sqrt{3\widetilde{h}}} = \sqrt{\frac{2}{3}} R_c < R_c
  \end{equation*}
  where by \eqref{Z1} we have that $r_1 > \sqrt{2}R_0$.  Hence, for
  $h_0\in (0,1]$ small enough, $m'(r)>0$ on $[R_0,r_1[$ and $m'(r)<0$ on
  $]r_1,+\infty[$.  This implies that $m(r)^{-1/2}$ is decreasing on
  $[R_0,r_1]$ and increasing on $[r_1,R_c-2]$. Therefore, $|\phi_0''|$
  is bounded by the maximum of $|\phi_0''(R_0)|$, $|\phi_0''(r_1)|$
  and $|\phi_0''(R_c-2)|$.
  \\
  \par
  By \eqref{eq3.24}, for $h_0\in (0,1]$ small enough there exists a
  constant $C>0$ (depending as well on $I$, $A$ and $R_0$) such that
  \begin{equation*}
    |\phi_0''(R_0)|\leq C
  \end{equation*}
  A straight forward computation shows that
  \begin{equation*}
    \phi''_0(r_1) =  
    -\frac{3\sqrt{3A}}{2R_c^2}	= -\frac{3\sqrt{3}\,\widetilde{h} \lambda^2}{4\sqrt{A}}=
    \mO_{A,I}(\widetilde{h})
  \end{equation*}
  and Taylor expansion shows that
  \begin{equation*}
    \phi''_0(R_c-2) =  \mO_{A,I}(\widetilde{h}^{1/4})
  \end{equation*}
  for all $0<h\leq h_0$ with $h_0\in (0,1]$ small enough.
  \begin{rem}
    If the constant in the big O notation depends on one of the
    parameters mentioned in the hypotheses of Lemma \ref{lem:3.2a},
    then we add them as subscripts to keep track of the dependencies.
  \end{rem}
  In conclusion, we have that for all $0<h\leq h_0$, with $h_0\in (0,1]$
  small enough,
  \begin{equation}\label{eq3.25}
    \phi_0''(r) =  \mO_{A,I,R_0}(1), \text{  for  } R_0\leq r\leq R_c-2.
  \end{equation}
  Next, suppose that $ r \in [ R_c-2,R_c-1]$. There,
  \begin{equation}\label{eq3.26}
    \phi''_0 = \frac{\psi_0'\chi + (\psi_0-\psi_1)\chi'}{2\sqrt{\psi}}.
  \end{equation}
  By \eqref{eq3.23.1},
  \begin{equation*}
    |\psi'_0(r)| \leq \frac{2 A}{ R_c^{3}}(1 + \mO(R_c^{-1}))
    \leq \mO_{A,I}(\widetilde{h}^{3/2}).
  \end{equation*}
  Since $\chi'\leq 0$, by \eqref{eq3.24.2} for all $h >0$, sufficiently
  small, we have
  \begin{equation}\label{eq3.26.1}
    0 \leq (\psi_0-\psi_1)\chi' \leq  \mO_{A,B,I}(\widetilde{h}).
  \end{equation}
  Combining the above two estimates with \eqref{eq3.26} and
  \eqref{eq3.24.3}, we get that
  \begin{equation}\label{eq3.26.1.0}
    \phi''_0(r) = \mO(\widetilde{h}^{1/4}), \quad  r \in [ R_c-2,R_c-1].
  \end{equation}
  Notice that $\phi_0''(r)=0$ for $r\geq R_c-1$. Then, putting this
  together \eqref{eq3.25} and \eqref{eq3.26.1.0}, we that for all
  $0<h\leq h_0$, with $h_0\in (0,1]$ small enough,
  \begin{equation}\label{eq3.27}
    \phi''_0(r) =\begin{cases}
      \mO_{A,I,R_0}(1), \quad R_0\leq r\leq R_c-2, \\
      \mO_{A,B,I}(\widetilde{h}^{1/4}), 
      \quad  R_c-2\leq r \leq R_c-1, \\
      0, \quad  r\geq R_c-1.
    \end{cases}
  \end{equation}
  \textbf{Step 3}. Recall that $\psi=(\phi'_0)^2$. Hence, by
  \eqref{eq3.24.3}
  \begin{equation}\label{Z2}
    -r \phi'''_0 = -r \frac{\psi''}{2\sqrt{\psi}} + \frac{r(\psi')^2}{4\psi^{3/2}} 
    \geq -r \frac{\psi''}{2\sqrt{\psi}} \defeq -f.
  \end{equation}
  We will show that $f$ is bounded. Suppose first that
  $r\in[R_0,R_c-2]$. There, by \eqref{eq3.24.1}
  \begin{equation*}
    f(r) = \frac{r\psi_0''}{2\sqrt{\psi_0}} = \frac{3\sqrt{2}A}{m(r)^{1/2}} \geq 0,
  \end{equation*}
  with $m$ as in \eqref{eq3.24}. Considering the critical point of
  $f$, as in the discussion following \eqref{eq3.24}, we get that
  $f(r)$ is bounded by the maximum of $f(R_0)$, $f(r_1)$ and
  $f(R_c-2)$. Performing similar computations as for \eqref{eq3.25},
  we get that for all $0<h\leq h_0$, with $h_0\in (0,1]$ small enough,
  \begin{equation*}
    0\leq f(r) \leq \mO_{A,I,R_0}(1) \quad \text{for } r\in[R_0,R_c-2].
  \end{equation*}
  Next, suppose that $r\in[R_c-2,R_c-1]$. By \eqref{eq3.24.1},
  \begin{equation*}
    r\psi'' = r(\psi_0'' \chi + 2\chi'\psi'_0 + (\psi_0-\psi_1)\chi'').
  \end{equation*}
  We will estimate each term separately. First, using \eqref{eq3.23.1}
  and Taylor expansion, we see that
  \begin{equation*}
    |r\psi_0''| = \frac{6A}{R_c^{3}}(1+\mO(R_c^{-1})) 
    = \mO_{A,I}(\widetilde{h}^{3/2})
  \end{equation*}
  and
  \begin{equation*}
    |r\psi_0'| = \frac{6A}{R_c^{2}}(1+\mO(R_c^{-1})) 
    = \mO_{A,I}(\widetilde{h}).
  \end{equation*}
  By \eqref{eq3.24.2}, we get that for all $0<h\leq h_0$, with $h_0\in (0,1]$
  small enough,
  \begin{equation*}
    r|(\psi_0-\psi_1)\chi''| \leq r\,\mO_{I,B}(\widetilde{h}) 
    \leq \mO_{A,B,I}(\widetilde{h}^{1/2}).
  \end{equation*}
  Combining the above three estimates with \eqref{eq3.24.3} and
  \eqref{Z2}, we have that for all $0<h\leq h_0$, with $h_0\in (0,1]$ small
  enough,
  \begin{equation*}
    |f(r)| \leq \mO_{A,B,I}(\widetilde{h}^{-1/4}), 
    \quad \text{for } r\in[R_c-2,R_c-1].
  \end{equation*}
  Finally notice that $\phi'''_0(r)=0$ for $r\geq R_c-1$. Therefore,
  \begin{equation}\label{eq3.28}
    -\widetilde{h}r \phi'''_0(r) \geq \begin{cases}
      \mO_{A,I,R_0}(\widetilde{h}), \quad R_0\leq r\leq R_c-2, \\
      \mO_{A,I,B}
      (\widetilde{h}^{3/4}), \quad  R_c-2\leq r \leq R_c-1, \\
      0, \quad  r\geq R_c-1.
    \end{cases}
  \end{equation}
  \textbf{Step 4}. We check that $\psi$, see \eqref{eq3.24.1}, with
  $\psi=(\phi_0')^2$ satisfies \eqref{eq3.23}.  Suppose first that
  $r\in[R_0,R_c-2]$. By \eqref{eq3.23.1},
  \begin{equation*}
    \psi_0 + \frac{r}{2}\psi_0' = -\frac{\widetilde{h}\lambda^2}{2}.
  \end{equation*}
  Then, by \eqref{eq3.23} \eqref{eq3.28}, \eqref{eq3.27} and
  \eqref{eq3.23.1} there exist constants $h_0\in (0,1]$ and $C_1>0$
  (depending on $A$, $I$, $R_0$ and $B$) such that for all $0<h\leq h_0$
  \begin{equation}
    \begin{split}
      G(r) &= \psi_0(r) +\widetilde{h}\lambda^2
      -\widetilde{h}^2\phi_0'' (r)
      +  \frac{r}{2}(\psi_0'(r)-\widetilde{h}^2\phi_0'''(r))\\
      & \geq \frac{\widetilde{h}\lambda^2}{2}
      + \mO_{A,I,R_0}(\widetilde{h}^2)  \\
      & \geq \frac{\lambda^2 \widetilde{h}}{C_1}.
    \end{split}
  \end{equation}
  \par
  Next, assume that $r\in[R_c-2,R_c-1]$. Then, by \eqref{eq3.23},
  \eqref{eq3.28}, \eqref{eq3.27}, \eqref{eq3.24.3}, \eqref{eq3.26.1}
  and \eqref{eq3.24.1} there exist constants $h_0\in (0,1]$ and $C_2>0$
  (depending on $A$, $I$, $R_0$ and $B$) such that for all $0<h\leq h_0$
  \begin{equation}
    \begin{split}
      G &\geq \psi_0\chi +\psi_1(1-\chi) + \widetilde{h}\lambda^2
      +\frac{r}{2}\psi_0'\chi +\frac{r}{2}(\psi_0-\psi_1)\chi' \\
      &\phantom{\geq}+ \mO_{A,B,I}(\widetilde{h}^{2+1/4})+
      \mO_{A,B,I}(\widetilde{h}^{3/2 + 1/4}) \\
      &\geq \frac{1}{C}\, \widetilde{h}^{3/2}
      +\frac{\widetilde{h}\lambda^2}{2}+
      \mO_{A,B,I}(\widetilde{h}^{1 + 3/4}) \\
      &\geq \frac{\lambda^2\widetilde{h}}{C_2}.
    \end{split}
  \end{equation}
  \par
  Finally, suppose that $r\geq R_c-1$. Then, by \eqref{eq3.28},
  \eqref{eq3.27} and \eqref{eq3.24.1},
  \begin{equation}
    G(r) = \frac{\widetilde{h}}{B^2}+\widetilde{h}\lambda^2 
  \end{equation}
  In conclusion, $\psi$ is a positive smooth function on $]0,+\infty[$
  and satisfies \eqref{eq3.23}.
\end{proof}
\begin{lem}\label{lem:3.2}
  Let $I$ be as in \eqref{eq3.2.2}. Let $R_3>R_2$ and let
  $P_0=-h^2\Delta$.  Let $\phi_0$, $C_{\phi_0}>0$, $A>0$ and $B>0$ be
  as in Lemma \ref{lem:3.2a}. Then, there exists a constant
  $C=C(I,R_0,A,B,C_{\phi_0})>0 $ and an $h_0 \in\, (0,1]$ such that
  for all $u\in \mathcal{C}^{\infty}_c(B(0,R_0,R_3))$ and all $0<h\leq h_0$
  \begin{equation}\label{eq3.29}
    \int \e^{2\phi_0/h^{4/3}}(|u|^2 + |h\nabla u|^2) dx 
    \leq\frac{C R_3^3}{h^{2+2/3} }
    \int \e^{2\phi_0/h^{4/3}}|(P_0-\lambda^2)u|^2dx,
  \end{equation}
  where we write $\phi_0=\phi_0(|x|)$.
\end{lem}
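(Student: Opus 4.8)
The plan is to run a Carleman / Rellich–Pohozaev argument: conjugate $P_0-\lambda^2$ by the exponential weight, pair the conjugated operator applied to $v\defeq\e^{\phi_0/h^{4/3}}u$ against a radial multiplier, integrate by parts, and extract a positive quadratic form. The positivity that makes everything work is exactly \eqref{eq3.23.0}: with $w(r)=r^2$, dividing that inequality by $w'(r)=2r>0$ shows it is equivalent to
\begin{equation*}
  G(r)\defeq (\phi_0')^2+h^{2/3}\lambda^2-h^{4/3}\phi_0''+\tfrac r2\big((\phi_0')^2-h^{4/3}\phi_0''\big)'\ \ge\ \frac{h^{2/3}\lambda^2}{C_0},\qquad r\ge R_0,
\end{equation*}
and $G$ is precisely the coefficient of $|v|^2$ that the Pohozaev multiplier will produce, so $w$ here is just the integrating factor writing that lower bound in divergence form.

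First I would set $v=\e^{\phi_0/h^{4/3}}u\in\mathcal{C}^\infty_c(B(0,R_0,R_3))$ and compute, after multiplying by $h^{2/3}$ so that $h^{2/3}$ is the effective semiclassical parameter (which makes the powers match Lemma \ref{lem:3.2a}),
\begin{equation*}
  \widetilde P\defeq h^{2/3}\,\e^{\phi_0/h^{4/3}}(P_0-\lambda^2)\,\e^{-\phi_0/h^{4/3}}
  = -h^{8/3}\Delta - (\phi_0')^2 - h^{2/3}\lambda^2 + 2h^{4/3}\big(\nabla\phi_0\cdot\nabla + \tfrac12\Delta\phi_0\big),
\end{equation*}
with $\phi_0'=\phi_0'(|x|)$ and $\nabla\phi_0=\phi_0'(|x|)\,x/|x|$; here $Q\defeq -h^{8/3}\Delta-(\phi_0')^2-h^{2/3}\lambda^2$ is self-adjoint and the last term is skew-adjoint. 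Then I would compute $\Rea(\widetilde P v\mid\mathcal M v)$ with the radial Pohozaev multiplier $\mathcal M v=x\cdot\nabla v+\tfrac{d-2}{2}v$, integrating by parts over $\R^d$ (all boundary terms vanish since $u$ is supported in the open annulus $B(0,R_0,R_3)$). The contribution of $-h^{8/3}\Delta$ cancels by the classical Pohozaev identity (the $\tfrac{d-1}{r}\partial_r$ and angular parts of $\Delta$ in polar coordinates conspire for this); the contribution of $-(\phi_0')^2-h^{2/3}\lambda^2$ produces $G(r)$ as the coefficient of $|v|^2$; and the skew-adjoint first-order term $2h^{4/3}\nabla\phi_0\cdot\nabla v$ paired with $x\cdot\nabla v$ produces the \emph{nonnegative} term $2h^{4/3}(\phi_0'/|x|)\,|x\cdot\nabla v|^2$ together with zeroth-order terms in $\phi_0''$, $\Delta\phi_0$ which combine with the $h^{4/3}\Delta\phi_0$ already present.

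By \eqref{eq3.23.0}, $G(r)\ge h^{2/3}\lambda^2/C_0$ on $\supp u\subset\{|x|\ge R_0\}$; the remaining error terms — those carrying $h^{4/3}\phi_0''$, $h^{8/3}|x|\phi_0'''$, $h^{8/3}\Delta^2\phi_0$, and the lower-order pieces from the $\tfrac{d-1}{r}\partial_r$ part of $\Delta$ — are $\mO(h)$, hence negligible next to the main term for $h$ small, and the (possibly negative) $\mO(h^{8/3})$ gradient errors are absorbed into $2h^{4/3}(\phi_0'/|x|)|x\cdot\nabla v|^2$ using $\phi_0'>0$ (by \eqref{eq3.24.3}) and $|x|\le R_3$; for all of this one uses the explicit bounds $0\le\phi_0'\le C_{\phi_0}$, $\phi_0''=\mO_{A,I,R_0}(1)$, and the estimates \eqref{eq3.27}, \eqref{eq3.28} on $\phi_0''$, $\phi_0'''$ from the proof of Lemma \ref{lem:3.2a}. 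This yields
\begin{equation*}
  \frac{h^{2/3}\lambda^2}{2C_0}\int|v|^2\,dx\ \le\ \Rea(\widetilde P v\mid\mathcal M v)\ \le\ \|\widetilde P v\|_2\,\|\mathcal M v\|_2\ \le\ \|\widetilde P v\|_2\,\big(R_3\|\nabla v\|_2+\tfrac{d-2}{2}\|v\|_2\big).
\end{equation*}
To close the argument I would recover the $\|h\nabla v\|_2$ part by combining the nonnegative term $2h^{4/3}(\phi_0'/|x|)|x\cdot\nabla v|^2$ above with the elliptic identity $h^{8/3}\|\nabla v\|_2^2=\Rea(\widetilde P v\mid v)+\int\big((\phi_0')^2+h^{2/3}\lambda^2\big)|v|^2\,dx\le\|\widetilde P v\|_2\|v\|_2+C\|v\|_2^2$ (the skew-adjoint part of $\widetilde P$ drops out of $\Rea(\widetilde P v\mid v)$), insert it into the previous display, absorb with Young's inequality, and — after tracking the powers of $R_3$ and $h$ — conclude $\|v\|_2^2+\|h\nabla v\|_2^2\lesssim R_3^3 h^{-2-2/3}\|\widetilde P v\|_2^2$. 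Undoing the conjugation (since $|\nabla\phi_0|=\phi_0'\le C_{\phi_0}$ one has $\e^{2\phi_0/h^{4/3}}(|u|^2+|h\nabla u|^2)\lesssim|v|^2+|h\nabla v|^2$, and $\|\widetilde P v\|_2^2=h^{4/3}\int\e^{2\phi_0/h^{4/3}}|(P_0-\lambda^2)u|^2\,dx$) then gives \eqref{eq3.29}.

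The main obstacle will be the integration-by-parts bookkeeping: pairing $\widetilde P v$ against the Pohozaev multiplier generates a long list of terms (from the $\tfrac{d-1}{r}\partial_r$ and angular parts of $\Delta$, from $\nabla\phi_0\cdot\nabla$ and $\Delta\phi_0$, and from differentiating $\phi_0'$, $\phi_0''$ once more), and one has to check that, after the Pohozaev cancellations, every surviving term is either nonnegative or is $\mO(h)$ relative to the main term $h^{2/3}\lambda^2/C_0$ supplied by \eqref{eq3.23.0} — which is exactly the purpose of the estimates on $\phi_0',\phi_0'',\phi_0'''$ proved in Lemma \ref{lem:3.2a}. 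A secondary difficulty is extracting the full $H^1$-type left-hand side of \eqref{eq3.29} (rather than just the $L^2$ part) with the stated power of $h$, which is where the elliptic identity for $Q$ and the positivity $\phi_0'>0$ enter.
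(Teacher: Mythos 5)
Your strategy is in essence the paper's: the proof there is also a multiplier/virial identity keyed to the positivity \eqref{eq3.23.0}, only written in polar coordinates via Datchev's radial energy $F(r)=\|hv'\|^2_{S^{d-1}}-((\Lambda+V_{\phi_0}-\lambda^2)v|v)_{S^{d-1}}$ and the identity $\int_0^\infty (wF)'\,dr=0$ with $w=r^2$, after the conformal substitution $u=\e^{\phi_0/h^{4/3}}r^{(d-1)/2}v$. Your Cartesian Pohozaev computation is correct in structure: the Laplacian contribution to $\Rea(\widetilde Pv\mid x\cdot\nabla v+\tfrac{d-2}2 v)$ does vanish, the potential $-( \phi_0')^2-h^{2/3}\lambda^2$ produces exactly $g+\tfrac r2 g'=(wg)'/w'$, which is what \eqref{eq3.23.0} bounds below, and the radial first-order term gives the nonnegative $2h^{4/3}\int (\phi_0'/r)|x\cdot\nabla v|^2$ plus $\mO(h^{4/3})$ zeroth-order errors controlled by \eqref{eq3.27}--\eqref{eq3.28}. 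So the $L^2$ part of the estimate goes through.

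The gap is in how you close the argument. Estimating $\Rea(\widetilde Pv\mid\mathcal Mv)\le\|\widetilde Pv\|\,(R_3\|\nabla v\|+C\|v\|)$ by plain Cauchy--Schwarz and then feeding in the elliptic identity $h^{8/3}\|\nabla v\|^2\le\|\widetilde Pv\|\|v\|+C\|v\|^2$ does not yield the claimed $\|v\|^2+\|h\nabla v\|^2\lesssim R_3^3h^{-8/3}\|\widetilde Pv\|^2$. Tracking the exponents: the dominant term $R_3h^{-4/3}\|\widetilde Pv\|\|v\|$ gives $\|v\|\lesssim R_3h^{-2}\|\widetilde Pv\|$, and then $\|h\nabla v\|^2\le h^{-2/3}(\|\widetilde Pv\|\|v\|+C\|v\|^2)\lesssim R_3^2h^{-14/3}\|\widetilde Pv\|^2$; translating back with $\|\widetilde Pv\|^2=h^{4/3}\int\e^{2\phi_0/h^{4/3}}|(P_0-\lambda^2)u|^2$ one lands on the constant $R_3^2h^{-10/3}$ rather than $R_3^3h^{-8/3}$ — strictly weaker for $h$-independent $R_3$ (and still weaker by $h^{-1/3}$ when $R_3\asymp h^{-1/3}$), so the lemma as stated is not proved. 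The paper avoids this loss because its identity carries the positive gradient term $(4h^{-4/3}w\phi_0'+w')\|hv'\|^2$ with the $h$-independent coefficient $w'=2r$, and the cross term $-2w\Rea(P_{\phi_0}v|v')$ is absorbed into it at the cost of only $\tfrac{r^3}{2h^2}\|P_{\phi_0}v\|^2$ (whence $R_3^3h^{-2}$; the extra $h^{-2/3}$ comes solely from the final gradient recovery \eqref{eq3.33a}). In your setup the multiplier $x\cdot\nabla+\tfrac{d-2}2$ kills the Laplacian contribution identically, so no such $h$-independent positive gradient term survives; the fix is to absorb $\Rea(\widetilde Pv\mid x\cdot\nabla v)$ into your term $2h^{4/3}\int(\phi_0'/|x|)|x\cdot\nabla v|^2$ using the lower bound $\phi_0'=\sqrt\psi\ge h^{1/2}/C$ from \eqref{eq3.24.3} (rather than Cauchy--Schwarz against $R_3\|\nabla v\|$), which restores an acceptable constant. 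I note that the weaker constant you actually obtain would still be harmless for the application to Theorem \ref{thm:1}, where all polynomial prefactors are swallowed by $\e^{Ch^{-4/3}\log\frac1h}$, but it does not establish \eqref{eq3.29} as stated.
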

\begin{proof}
  The proof is an adaption of the proof of a global Carleman estimate
  by Datchev \cite{Da14}. We begin by passing to spherical
  coordinates, where
  \begin{equation*}
    -h^2\Delta = -h^2\partial^2_r - \frac{d-1}{2r}h^2\partial_r - r^{-2}h^2\Delta_{S^{d-1}},
  \end{equation*}
  where $-\Delta_{S^{d-1}} \geq 0$ denotes the Laplace-Beltrami
  operator on the $(d-1)$-dimensional sphere $S^{d-1}$. Set
  \begin{equation}\label{eq3.6.1}
    P_{\phi_0} \defeq 
    \e^{\phi_0/h^{4/3}}
    r^{\frac{d-1}{2}}
    (P_0-\lambda^2)
    r^{-\frac{d-1}{2}}
    \e^{-\phi_0/h^{4/3}}.
  \end{equation}
  A straight forward computation shows that
  \begin{equation}\label{eq3.6}
    P_{\phi_0} =  -h^2\partial^2_r + 2\phi_0'h^{2/3}\partial_r +V_{\phi_0} + \Lambda - \lambda^2,
  \end{equation}
  where $\phi_0' = \partial_r\phi_0$ and
  \begin{equation}\label{eq3.6a}
    V_{\phi_0} \defeq h^{2/3}\phi_0'' - h^{-2/3}(\phi_0')^2 
  \end{equation}
  and
  \begin{equation}\label{eq3.6.2}
    \Lambda = h^2r^{-2}\left(-\Delta_{S^{d-1}}+ \frac{d-1}{2}\frac{d-3}{2}\right)
  \end{equation}
  which is a positive semidefinite operator for $d\geq 3$, and
  $\geq -\frac{h^2}{4r^2}$ for $d =2$.  Next, set $w=w(r)= r^2$ and
  let $f' = \partial_r f$ denote the radial derivative, and write for
  $v\in\mathcal{C}_c^{\infty}(B(0,R_0,R_3))$
  \begin{equation}\label{eq3.7}
    F(r) \defeq \|hv'(r\cdot)\|_{S^{d-1}}^2 
    - ( (\Lambda + V_{\phi_0} - \lambda^2)v(r\cdot)|v(r\cdot))_{S^{d-1}}, \quad r>0,
  \end{equation}
  where the norm and the scalar product are the norm and scalar
  product of $L^2(S^{d-1})$. Since the support of $v$ is compact, we
  have that
  \begin{equation}\label{eq3.8}
    \int_0^{\infty} (w(r)F(r))' dr =0.
  \end{equation}
  Since $\Lambda$ is self-adjoint, we get by \eqref{eq3.6},
  \begin{equation}\label{eq3.8.1}
    \begin{split}
      F' &= 2\Rea (h^2\partial_r^2v|v')_{S^{d-1}}
      - 2\Rea( (\Lambda + V_{\phi_0} -\lambda^2)v|v')_{S^{d-1}} \\
      &\phantom{=}+2r^{-1}(\Lambda v|v)_{S^{d-1}} - ( V_{\phi_0}'v|v)_{S^{d-1}} \\
      &= -2\Rea (P_{\phi_0} v|v')_{S^{d-1}} + 4h^{-4/3}\phi_0' \|hv'\|_{S^{d-1}}^2 \\
      &\phantom{=}+2r^{-1}(\Lambda v|v)_{S^{d-1}} - (
      V_{\phi_0}'v|v)_{S^{d-1}}.
    \end{split}
  \end{equation}
  Recall that we are working in $0<R_0\leq r\leq R_3$ and that
  $w=r^2$. Therefore, $w\phi'_0\geq 0$ and $2r^{-1}w-w'= 0$. Then,
  using as well the elementary inequality
  $\|a\|^2-2\Rea(a|b) + \|b\|^2\geq 0$, we get that
  \begin{equation}\label{eq3.9}
    \begin{split}
      (wF)'
      &= -2w\Rea ( P_{\phi_0} v|v')_{S^{d-1}} + (4h^{-4/3}w\phi_0'+w')\|hv'\|_{S^{d-1}}^2 \\
      &\phantom{=}+(2wr^{-1}-w')(\Lambda v|v)_{S^{d-1}}
      +( (w(\lambda^2-V_{\phi_0}))'v|v)_{S^{d-1}} \\
      &\geq -\frac{r^3}{2h^2}\|P_{\phi_0} v\|^2_{S^{d-1}} +(
      (w(\lambda^2-V_{\phi_0}))'v|v)_{S^{d-1}}.
    \end{split}
  \end{equation}
  Integrating \eqref{eq3.9} with respect to $r$, we get by
  \eqref{eq3.8}, \eqref{eq3.23.0} and \eqref{eq3.6a}
  \begin{equation}\label{eq3.10}
    \int_0^{\infty}\int_{S^{d-1}} |v|^2 dr d\sigma 
    \leq 
    \frac{C_0 R_3^3}{4R_0 \lambda^2h^2 }\int_0^{\infty}
    \int_{S^{d-1}}|P_{\phi_0} v|^2dr d\sigma. 
  \end{equation}
  Here, we used as well that $\supp v \subset B(0,R_0,R_3)$. Moreover, recall
  from Lemma \ref{lem:3.2a} that the constant $C_0$ depends only on
  the energy interval $I$ and the constants $A,B,R_0$.
  \par
  Setting $u = \e^{\phi_0/h^{4/3}}r^{(d-1)/2}v$, we get by
  \eqref{eq3.6.1} that
  \begin{equation}\label{eq3.14}
    \int \e^{2\phi_0/h^{4/3}} |u|^2 dx
    \leq
    \frac{C_0R_3^3}{4R_0 \lambda^2h^2 } \int \e^{2\phi_0/h^{4/3}}|(P_0-\lambda^2) u|^2dx 
  \end{equation}
  Integration by parts yields that
  \begin{equation}\label{eq3.14.1}
    \int \e^{2\phi_0/h^{4/3}} |h\nabla u|^2 dx = 
    -\Rea \int h \,\mathrm{div}(\e^{2\phi_0/h^{4/3}}h\nabla u)\overline{u} dx.
  \end{equation}
  The right hand side is bounded from above by
  \begin{equation}\label{eq3.14.2}
    \begin{split}
      -\int \e^{2\phi_0/h^{4/3}}&
      2\Rea\left(\sqrt{2}\phi'_0(|x|)\overline{u}h^{-1/3} \frac{x}{|x|} \bigg| \frac{h}{\sqrt{2}}\nabla u\right) dx \\
      &+ \int \e^{\phi_0/h^{4/3}} |(P_0-\lambda^2)u|\,|u| dx
      +\lambda^2\int \e^{\phi_0/h^{4/3}}|u|^2 dx.
    \end{split}
  \end{equation}
  Using the elementary inequality $2\Rea(a|b)\leq |a|^2+|b|^2$, we get by
  \eqref{eq3.14.1}, \eqref{eq3.14.2} that
  \begin{equation}\label{eq3.33a}
    \begin{split}
      \int \e^{2\phi_0/h^{4/3}} |h\nabla u|^2 dx &\leq 
      (2\lambda^2+1+4\|\phi'\|_{\infty}^2h^{-2/3} )\int \e^{2\phi_0/h^{4/3}} | u|^2 dx \\
      &+\int \e^{2\phi_0/h^{4/3}}|(P_0-\lambda^2) u|^2dx.
    \end{split}
  \end{equation}
  Let $\lambda_{\infty}$ denote the minimum of the absolute value of
  the supremum and infimum of the interval $I$. Then, by
  \eqref{eq3.33a}, \eqref{eq3.14}, we have that, for $h>0$ small
  enough,
  \begin{equation}\label{eq3.15.1}
    \int \e^{2\phi_0/h^{4/3}}(|u|^2 + |h\nabla u|^2) dx 
    \leq\frac{4C_0 R_3^3 \|\phi'_0\|_{\infty}^2}{R_0 \lambda_{\infty}^2 h^{2+2/3} }
    \int \e^{2\phi_0/h^{4/3}}|(P_0-\lambda^2)u|^2dx.
  \end{equation}
  Recall from Lemma \ref{lem:3.2a} that
  $\|\phi'_0|_{[R_0,+\infty[}\|_{\infty} \leq C_{\phi_0}$. This concludes
  the proof of Lemma \ref{lem:3.2}.
\end{proof}
\subsection{Combining Carleman estimates}
Next, let $R_3=R_3(h) \asymp h^{-1/3}$ and let
$u\in\mathcal{C}^{\infty}(B(0,R_3))$ so that
\begin{equation}\label{eq3.34}
  (P_V-\lambda^2)u = v\in\mathcal{C}_c^{\infty}(B(0,R))
\end{equation}
and suppose that $R_0 < R < R_1-2$. Recall \eqref{eq3.17} and set
$M=\phi(R_2)$.  Then,
\begin{equation}\label{eq3.35}
  \begin{split}
    \int_{B(0,R_1)}(|u|^2 + |h\nabla u|^2) dx
    &\leq \frac{C \e^{2M/h^{4/3}}}{h^{2/3}} \int |v|^2dx \\
    &+ Ch^{4/3}\e^{2M/h^{4/3}} \int_{B(0,R_1,R_2)}(|u|^2 + |h\nabla
    u|^2) dx.
  \end{split}
\end{equation}
Let
$1_{B(0,R_1-1,R_3-1)}\prec\chi\in\mathcal{C}^{\infty}_c(B(0,R_1-2,R_3);[0,1])$
so that all derivatives of $\chi$ are bounded (uniformly in
$h$). Applying \eqref{eq3.29} to $\chi u$, we obtain similar to
\eqref{eq3.17} that
\begin{equation}\label{eq3.36}
  \begin{split}
    \int_{B(0,R_1,R_3-1)} \e^{2\phi_0/h^{4/3}}(|u|^2 + |h\nabla u|^2)
    dx &\leq 
    \frac{C R_3^3}{h^{2/3}} \int_{B(0,R_1-2,R_1-1)}\e^{2\phi_0/h^{4/3}}(|u|^2 + |h\nabla u|^2)  dx\\
    & + \frac{C R_3^3}{h^{2/3}}
    \int_{B(0,R_3-1,R_3)}\e^{2\phi_0/h^{4/3}}(|u|^2 + |h\nabla u|^2)
    dx.
  \end{split}
\end{equation}
Here we used as well that 
\begin{equation*}
	(P_0-\lambda^2)u=(P_V-\lambda^2)u=v=0, \quad \text{on } B(0,R_1-2,R_3),
\end{equation*}
which follows from \eqref{eq3.34} and the assumption that $\supp V\Subset B(0,R_0)$, 
see the discussion after \eqref{int1.0}.
\par
Recall \eqref{eq3.23.3} and let $\eta>0$. Then, by shifting $\phi_0$
by a constant and by choosing $A>0$ large enough, we can arrange that,
for $h>0$ small enough,
\begin{equation*}
  \begin{split}
    &\phi_0(|x|)\leq -\eta, \quad \text{for } |x| \leq R_1-1, \\
    &\phi_0(|x|)\geq M, \quad \text{for } |x| \geq R_1.
  \end{split}
\end{equation*}
Thus,
\begin{itemize}
\item the second term on the right hand side of \eqref{eq3.35} is
  bounded by the a constant times the left hand side of
  \eqref{eq3.36};
\item the first term on the right hand side of \eqref{eq3.36} is
  bounded by a factor $\mO(\e^{-1/Ch})$ times the left hand side of
  \eqref{eq3.35}.
\end{itemize}
Therefore, adding \eqref{eq3.35} and \eqref{eq3.36} we get for $h>0$
small enough
\begin{equation}\label{eq3.37}
  \begin{split}
    \int_{B(0,R_3-1)}\e^{2\psi/h^{4/3}}(|u|^2 + |h\nabla u|^2) dx
    &\leq e^{2M/h^{4/3}} \frac{C}{h^{2/3}} \int |v|^2dx \\
    & + \frac{C R_3^3}{h^{2/3}}
    \int_{B(0,R_3-1,R_3)}\e^{2\psi/h^{4/3}}(|u|^2 + |h\nabla u|^2) dx,
  \end{split}
\end{equation}
with
\begin{equation}\label{eq3.38}
  \psi(x)=\begin{cases}
    0, \quad \text{for } |x| \leq R_1, \\
    \phi_0(x), \quad \text{for } |x| \geq R_1.
  \end{cases}
\end{equation}
\subsection{Outgoing solutions and flux norm}
Now assume that
\begin{equation}\label{eq3.50}
  u=R_V(\lambda)v
\end{equation}
with $v\in L^2_{\comp}(B(0,R))$ is an outgoing solution with $R>0$ as
above.  By Theorem \ref{thm:Mc2} and analytic continuation we see that
$u$ satisfies $(P_V-\lambda^2)u = v$. Moreover, by a density argument,
we see that $u$ and $v$ satisfy \eqref{eq3.37}. In particular, since
$u$ is outgoing, there exists a $w\in L^2_{\comp}(B(0,R))$ so that
\begin{equation*}
  u(x) = R_0(\lambda)w(x), \quad \text{for } |x|\geq R_1. 
\end{equation*}
Hence, $u$ is a solution to the free Helmholtz equation
$(-h^2\Delta -\lambda^2)u=0$ outside the ball $B(0,R_1)$.
\\
\par
Let $R_c\defeq \widetilde{R_c}(\lambda)h^{-1/3}$ be as in Lemma
\ref{lem:3.2a}. Recall \eqref{eq3.2.2}, let $C_r>1$ be a constant and
set
\begin{equation}\label{eq3.38.0}
  R_3 \defeq \widetilde{R}_3h^{-1/3} \defeq C_r \widetilde{R_c}(a)h^{-1/3}.
\end{equation}
Recall \eqref{eq3.23.3} and write for $r\geq R_c$
\begin{equation}\label{eq3.38a}
  \begin{split}
    \phi_0(r) &= A^{1/2} \int_{R_0}^{R_c-2}(t^{-2} - R_c^{-2})^{1/2} dt +
    \int_{R_c-2}^{R_c-1} \phi_0'(t) dt
    + \int_{R_c-1}^{r}B^{-1}h^{1/3}dt \\
    &\defeq I_1 + I_2 + I_3.
  \end{split}
\end{equation}
The first integral in \eqref{eq3.38a} is bounded by
\begin{equation*}
  |I_1| \leq \frac{A^{1/2}}{3} \log \frac{1}{h} + A^{1/2}\log\frac{\sqrt{2 A} - 2 |\lambda| h^{1/3} }{|\lambda|\,R_0}.
\end{equation*}
The second integral $|I_2| \leq C_{\phi_0}$, see Lemma \ref{lem:3.2a}, and
the third integral $I_3 = h^{1/3}B^{-1}(r-R_c+1)$. Hence,
%
\begin{equation}\label{eq3.38.1}
  \phi_0(|x|) = C_0(h) + \frac{h^{1/3}}{B}|x|,\quad  \text{for } |x|\geq R_c
\end{equation}
where $C_0(h)$ depends on $A,I,B,R_0,C_{\phi_0}$ and $h>0$ satisfying
\begin{equation}\label{eq3.38.2}
  |C_0(h)| \leq \frac{A^{1/2}}{3} \log \frac{1}{h} + \mO_{A,I,B,R_0}(1)
\end{equation}
for $h>0$ small enough. Using Lemma \ref{lem:Out} below and
\eqref{eq3.38.0}, we see that for $C_r>1$ large enough, the second
term on the right hand side of \eqref{eq3.37} is bounded from above by
\begin{equation}\label{eq3.38.3}
  \begin{split}
    \mO(h^{-5/3})\e^{2\psi(R_3)/h^{4/3}}&\int_{B(0,R_3-1,R_3)}(|u|^2 + |h\nabla u|^2)  dx \\
    & \leq  \mO(h^{-3})\e^{2\psi(R_3)/h^{4/3}} \Ima (v | u) \\
    &\phantom{\leq}+ \mO(h^{-5/3}) \e^{(2\psi(R_3)-\delta )/ h^{4/3}}
    \int_{A(\widetilde{R}_3/4,1,h)}(|u|^2 + |h\nabla u|^2)\,dx.
  \end{split}
\end{equation}
for some $\delta>0$. Using \eqref{eq3.38.1} and \eqref{eq3.38.0} we
get that
\begin{equation*}
  \begin{split}
    2\psi(R_3) -\delta = 2C_0+\frac{1}{B}(2\widetilde{R}_3 - B\delta)
    \leq 2\psi(|x|)-\frac{\delta_1}{B}, \quad \text{for } |x|\geq
    (\widetilde{R}_3/4-1)h^{-1/3}.
  \end{split}
\end{equation*}
where in the second to the last inequality we chose $B>0$ to be large
enough so that
$2\widetilde{R}_3-\delta B \leq 2(\widetilde{R}_3/4 -1) - \delta_1$
for some $\delta_1>0$.  Hence, for $h>0$ small enough, we can absorb
the second term on the right hand side of \eqref{eq3.37} into the term
on the left hand side of \eqref{eq3.37}. Hence,
\begin{equation}\label{eq3.38.4}
  \begin{split}
    \int_{B(0,R_3-1)}\e^{2\psi/h^{4/3}}(|u|^2 + |h\nabla u|^2) dx
    &\leq e^{2M/h^{4/3}} \frac{C}{h^{2/3}} \int |v|^2dx \\
    & +\mO(h^{-3})\e^{2\psi(R_3)/h^{4/3}} \Ima (v | u) .
  \end{split}
\end{equation}
By the Cauchy-Schwartz inequality and \eqref{eq3.50} we get that
\begin{equation}\label{eq3.38.5}
  \begin{split}
    \Ima (v|u) &\leq \|v\|\, \|u\|_{L^2(B(0,R))} \\
    &\leq \frac{h^3}{C} \e^{-2\psi(R_3)/h^{4/3}} \| u \|_{L^2(B(0,R))}^2 +
    Ch^{-3} \e^{2\psi(R_3)/h^{4/3}} \| v \|^2 .
  \end{split}
\end{equation}
In view of \eqref{eq3.38},\eqref{eq3.38.1}, \eqref{eq3.38.2} by
\eqref{eq3.38.4} and \eqref{eq3.38.5} there exists constant $C,C'>0$
such that
\begin{equation*}
  \int_{B(0,R_3-1)}(|u|^2 + |h\nabla u|^2) dx 
  \leq C' e^{Ch^{-4/3} \log\frac{1}{h}} \int |v|^2dx
\end{equation*}
which together with \eqref{eq3.50} concludes the proof of Theorem
\ref{thm:1}.
\begin{lem}\label{lem:Out}
  Assume \eqref{eq3.50}. Then, for any $\widetilde{R} >0$ (independent
  of $h>0$) and any $0< \eta< 3\widetilde{R}/16$ there exist constants
  $C,C',\delta,h_0>0$ such that for any $\lambda \in I$ and any
  $0<h <h_0$
  \begin{equation*}
    \begin{split}
      \int_{A(\widetilde{R},\eta,h)}(|u|^2 + |h\nabla u|^2)\,d x
      &\leq C h^{-4/3}\Ima (v | u) \\
      &+ C' \e^{-\delta /
        h^{4/3}}\int_{A(\widetilde{R}/4,\eta,h)}(|u|^2 + |h\nabla
      u|^2)\,dx,
    \end{split}
  \end{equation*}
  where
  $A(\widetilde{R},\eta,h) \defeq
  B(0,(\widetilde{R}-\eta)h^{-1/3},(\widetilde{R}+\eta)h^{-1/3})$.
\end{lem}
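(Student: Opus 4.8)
The estimate rests on the fact that, for $h$ small, both annuli — centred at radius $\asymp h^{-1/3}\to\infty$ — lie inside $\{|x|>R_1\}$, where $u=R_V(\lambda)v$ (recall \eqref{eq3.50}) solves the free Helmholtz equation $(-h^2\Delta-\lambda^2)u=0$ and is outgoing. The plan is to combine a flux identity with a Carleman/Rellich estimate in a bounded shell for the free operator.

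\textbf{Flux identity.} Integrating $\Ima\bigl(\overline u\,(P_V-\lambda^2)u\bigr)$ over $B(0,\rho)$ and using Green's formula, the hypotheses that $V$ and $\lambda^2$ are real and that $\supp v\subset B(0,R)$ give, for every $\rho\ge R$,
\[
  \Ima(v|u)\;=\;h^2\,\Ima\!\int_{\partial B(0,\rho)}(\partial_r u)\,\overline u\,dS\;\ge\;0,
\]
a quantity independent of $\rho$ (its $\rho$-derivative vanishes for $\rho>R$ since $v=(P_V-\lambda^2)u$ is supported in $B(0,R)$) and nonnegative because $u$ is outgoing. This is the one place where the reality of $V$ enters.

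\textbf{Shell estimate.} On $\Omega_h\defeq\{(\widetilde R/4-\eta)h^{-1/3}<|x|<\rho_1\}$ with $\rho_1\defeq(\widetilde R+2\eta)h^{-1/3}$, I would redo the spherical-coordinate commutator computation from the proof of Lemma~\ref{lem:3.2} — conjugating $-h^2\Delta-\lambda^2$ by $e^{\varphi_h/h^{4/3}}r^{(d-1)/2}$ and integrating the derivative of the radial Morawetz functional $F(r)$ with multiplier $w(r)=r^2$ — but over the bounded $r$-interval, so that $\int(wF)'\,dr$ leaves genuine boundary terms at both ends. I would apply it to $\chi u$, where $\chi\equiv1$ on $[(\widetilde R/4+\eta)h^{-1/3},\rho_1]$ and $\chi$ vanishes near the inner end: this kills the inner boundary term and replaces the source $(-h^2\Delta-\lambda^2)(\chi u)$ by $[-h^2\Delta,\chi]u$, supported in $A(\widetilde R/4,\eta,h)$. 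The weight $\varphi_h$ is built, as in Lemma~\ref{lem:3.2a}, to be pseudoconvex — so an analogue of \eqref{eq3.23.0} holds and the bulk term controls $\int_{A(\widetilde R,\eta,h)}e^{2\varphi_h/h^{4/3}}(|u|^2+|h\nabla u|^2)$ with the right $h$-power — with $\varphi_h'\asymp h^{1/3}$ on $[(\widetilde R/4-\eta)h^{-1/3},(\widetilde R-\eta)h^{-1/3}]$, so that $\varphi_h$ rises there by a fixed amount $\ge2\delta$, and $\varphi_h\equiv c_0$ constant on $[(\widetilde R-\eta)h^{-1/3},\rho_1]$ (a constant weight trivially satisfies the pseudoconvexity inequality, which is why $\varphi_h$ may be frozen on and beyond $A(\widetilde R,\eta,h)$). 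The constraint $0<\eta<3\widetilde R/16$ is exactly what leaves room between the two annuli to carry this out.

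\textbf{Absorption.} Dividing the resulting weighted inequality by $e^{2c_0/h^{4/3}}$: the bulk term gives the left-hand side $\int_{A(\widetilde R,\eta,h)}(|u|^2+|h\nabla u|^2)$; the $[-h^2\Delta,\chi]u$ term, carrying the factor $e^{2\varphi_h/h^{4/3}}\le e^{2(c_0-2\delta)/h^{4/3}}$ on $A(\widetilde R/4,\eta,h)$, produces $C'e^{-\delta/h^{4/3}}\int_{A(\widetilde R/4,\eta,h)}(|u|^2+|h\nabla u|^2)$ after swallowing the polynomial-in-$h$ loss; and the outer boundary term $w(\rho_1)F(\rho_1)$, carrying the factor $e^{2\varphi_h(\rho_1)/h^{4/3}}=e^{2c_0/h^{4/3}}$, reduces after division to a pure flux term. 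Here the first ingredient and the far-field structure of $R_0(\lambda)$ enter: at radius $\asymp h^{-1/3}$ the outgoing $u$ satisfies $\partial_r u=(i\lambda/h)u+(\text{lower order})$, so $F(\rho_1)$ is, up to negligible pieces, proportional to $\lambda h^{-1}\Ima(v|u)$, and with $w(\rho_1)\asymp h^{-2/3}$ together with the $h^{-1/3}$ gained from the bulk term one lands on $Ch^{-4/3}\Ima(v|u)$. Assembling these three pieces yields the claimed inequality.

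\textbf{The main obstacle} is precisely this outer boundary term: one must verify that for the honest outgoing $u=R_V(\lambda)v$ the boundary quantity $F(\rho_1)$ — a combination of $\int_{\partial B(0,\rho_1)}|h\partial_r u|^2$, $\int_{\partial B(0,\rho_1)}|u|^2$ and $\Ima\int_{\partial B(0,\rho_1)}(\partial_r u)\overline u$ — leaves behind no uncontrolled positive multiple of $\int_{\partial B(0,\rho_1)}|u|^2$ but collapses, modulo terms reabsorbed on the left-hand side or exponentially smaller, to the flux; this is where outgoingness is used essentially (and hence, through the flux identity, the reality of $V$), and pushing the $h$-powers through to exactly $h^{-4/3}$ with constants uniform in $\lambda\in I$ is the technical heart. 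Equivalently, the shell estimate and the absorption step can be carried out mode by mode after expanding $u$ in spherical harmonics on $\{|x|>R_1\}$: the classically forbidden high modes, which decay by $e^{-\delta/h^{4/3}}$ across the gap between the two annuli, account for the error term, while the classically allowed low modes are controlled directly by their nonnegative modal fluxes.
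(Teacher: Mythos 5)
Your architecture --- a flux identity from the reality of $V$ and $\lambda^2$, a quantitative estimate on the shell tying the boundary data of the outgoing free solution to its flux, and an absorption of an exponentially small inner error --- is the same as the paper's. But the paper's proof is much shorter because, after rescaling by $U_h\phi(x)=h^{-d/6}\phi(h^{-1/3}x)$ (which turns the $h$-dependent annuli into fixed annuli and the operator into $-\widetilde h^2\Delta-\lambda^2$ with $\widetilde h=h^{4/3}$), it simply invokes Burq's Proposition 2.2 of \cite{Bu98} for outgoing solutions of the free Helmholtz equation, then uses the mean value theorem to trade the sphere integral at a well-chosen radius $\widetilde R_2\in[\widetilde R-\eta,\widetilde R+\eta]$ for the annulus integral, and trace/elliptic estimates to handle the inner sphere. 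What you call ``the main obstacle'' --- that $-\Ima\int_{r=\widetilde R_2}\widetilde h\,\partial_r\widetilde u\cdot\overline{\widetilde u}\,d\sigma$ dominates the full boundary norm $\int_{r=\widetilde R_2}(|\widetilde u|^2+|\widetilde h\nabla\widetilde u|^2)\,d\sigma$ up to $C'\e^{-\delta|\lambda|/\widetilde h}$ times the inner boundary norm --- is precisely the content of that proposition, and it is the entire substance of the lemma.

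On that point your proposal has a genuine gap. You do not prove the outer-boundary collapse; you only flag it. The Carleman-with-boundary-terms route you sketch does not obviously close: the outer boundary functional $w(\rho_1)F(\rho_1)$, with $F=\|hv'\|^2-((\Lambda+V_{\phi_0}-\lambda^2)v|v)$, is not sign-definite for a general Helmholtz solution (it would give the wrong sign for an incoming wave), so outgoingness must enter quantitatively and not merely through the nonnegativity of the flux. Moreover the relation $\partial_r u=(i\lambda/h)u+(\text{lower order})$ is a far-field asymptotic that fails at radius $\asymp h^{-1/3}$ for the classically forbidden angular momenta $\gtrsim\lambda|x|/h$; those modes are exactly the source of the $\e^{-\delta/h^{4/3}}$ error and must be separated off. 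Your closing remark about a spherical-harmonic decomposition into allowed and forbidden modes is indeed the right idea (it is how Burq's proposition is proved), but as written it is an unexecuted sketch of the key lemma rather than a proof of it.
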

\begin{proof}
  Let $U_{h}: L^2(\R^d) \to L^2(\R^d)$ be the unitary map defined by
  \begin{equation}\label{eq3.39}
    (U_{h}\phi)(x) = h^{-d/6}\phi(h^{-1/3}x). 
  \end{equation}
%
  Using \eqref{eq3.0.3} we rescale the operator $P_V-\lambda^2$ by
  $h^{-1/3}$, i.e.
  \begin{equation}\label{eq3.40}
    \begin{split}
      U_{h}(P_V - \lambda^2) U^*_{h}&=
      -h^{2+2/3}\Delta + V(h^{-1/3}x;h)  - \lambda^2 \\
      &\defeq -\widetilde{h}^2\Delta + \widetilde{V}(x;h) - \lambda^2 \\
      &\defeq (\widetilde{P}_{\widetilde{V}} - \lambda^2).
    \end{split}
  \end{equation}
  Let $u$ be as in \eqref{eq3.50}. As discussed there, $u$ is a
  solution to the free Helmholtz equation
  $(-h^2\Delta-\lambda^2)u= 0$ outside the ball
  $B(0,R_1)$. Set $\widetilde{u}\defeq U_{h}u$.  Then, we have
  that outside the ball $B(0,R_1h^{1/3})$
  \begin{equation}\label{eq3.41}
    (\widetilde{P}_0-\lambda^2)\widetilde{u} = 0. 
  \end{equation}
  Hence, by \cite[Proposition 2.2]{Bu98}, it follows that for any
  $\widetilde{R}_2>\widetilde{R}_1>0$ (constants independent of $h>0$)
  there exist $C,C',\delta,h_0>0$ such that for any $\lambda\in I$ and
  any $0<h\leq h_0$
  \begin{equation}\label{eq3.42}
    \begin{split}
      -\Ima \int_{r=\widetilde{R}_2} \widetilde{h}\partial_r
      \widetilde{u}\cdot\overline{\widetilde{u}}\,d\sigma \geq &~
      C\lambda \int_{r=\widetilde{R}_2}(|\widetilde{u}|^2
      + \lambda^{-2} |\widetilde{h}\nabla \widetilde{u}|^2)\,d\sigma \\
      &
      -C'\e^{-\delta|\lambda|/\widetilde{h}}\int_{r=\widetilde{R}_1}(|\widetilde{u}|^2
      + \lambda^{-2} |\widetilde{h}\nabla \widetilde{u}|^2)\,d\sigma,
    \end{split}
  \end{equation}
  where $d\sigma$ is the surface measure on
  $\partial B(0,\widetilde{R}_2)$, respectively on
  $\partial B(0,\widetilde{R}_1)$, induced from the Lebesgue measure
  on $\R^d$. Let $\eta>0$ be as in the hypothesis, then
  $\widetilde{R}/4+\eta < \widetilde{R}/2-\eta$. The mean value
  theorem implies that there exists a
  $\widetilde{R}_2\in [\widetilde{R}-\eta,\widetilde{R}+\eta]$ such that
  \begin{equation}\label{eq3.43}
    \int_{r=\widetilde{R}_2}(|\widetilde{u}|^2 
    +|\widetilde{h}\nabla \widetilde{u}|^2)\,d\sigma
    =\frac{1}{2}\int_{B(0,\widetilde{R}-\eta,\widetilde{R}+\eta)}(|\widetilde{u}|^2 
    + |\widetilde{h}\nabla \widetilde{u}|^2)\,d x.
  \end{equation}
  Next, set $\widetilde{R}_1=\widetilde{R}/4$ and let
  $1_{B(0,\widetilde{R}_1-\eta/2,\widetilde{R}_1+\eta/2)}
  \prec\chi\in\mathcal{C}^{\infty}_c(B(0,\widetilde{R}_1-\eta,\widetilde{R}_1+\eta);[0,1])$.
  Then, there exist constants $c,\widetilde{c}>0$ such that
  \begin{equation*}
    \begin{split}
      \int_{B(0,\widetilde{R}_1-\eta,\widetilde{R}_1+\eta)}(|\widetilde{u}|^2
      + |\widetilde{h}\nabla \widetilde{u}|^2)dx &\geq
      c \int_{B(0,\widetilde{R}_1)}(|\chi \widetilde{u}|^2 + |\widetilde{h}\nabla \chi \widetilde{u}|^2)dx \\
      &\geq \widetilde{c} \int_{r=\widetilde{R}_1}|\widetilde{u}|^2
      \,d\sigma,
    \end{split}
  \end{equation*}
  where in the last inequality we use that the trace map
  $\tau:H^1(B(0,\widetilde{R}_1))\to L^2(\partial B(0,\widetilde{R}_1))$
  is continuous.  Similarly, using \eqref{eq3.41}, we get that
  \begin{equation}\label{eq3.44}
    \int_{r=\widetilde{R}_1}(| \widetilde{u}|^2 + |\widetilde{h}\nabla \widetilde{u}|^2)\,d\sigma \leq \mO(1)
    \int_{B(0,\widetilde{R}_1-\eta,\widetilde{R}_1+\eta)}(|\widetilde{u}|^2 + |\widetilde{h}\nabla \widetilde{u}|^2)dx. 
  \end{equation}
  Recall that $\lambda \in [a,b]\Subset\R\backslash\{0\}$, see
  \eqref{eq3.2.2}, and assume for simplicity that $a>0$.  Hence
  $a^{-2}\geq \lambda^{-2} \geq b^{-2} >0 $. Then, applying
  \eqref{eq3.42} with $\widetilde{R}_2$ and $\widetilde{R}_1$ as in
  \eqref{eq3.43} and \eqref{eq3.44} yields that there exist constants
  $C,C',\delta,h_0>0$ such that for any $\lambda\in I$ and any $0<h\leq h_0$
  \begin{equation*}
    \begin{split}
      \begin{split}
	-\Ima \int_{r=\widetilde{R}_2} \widetilde{h}\partial_r
        \widetilde{u}\cdot\overline{\widetilde{u}}\,d\sigma &\geq
        C\lambda \min\{1,b^{-2}\}
        \int_{r=\widetilde{R}_2}(|\widetilde{u}|^2
	+ |\widetilde{h}\nabla \widetilde{u}|^2)\,d\sigma \\
	& -
        \max\{1,a^{-2}\}C'\e^{-\delta|\lambda|/\widetilde{h}}\int_{r=\widetilde{R}_1}(|\widetilde{u}|^2
	+  |\widetilde{h}\nabla \widetilde{u}|^2)\,d\sigma \\
	& \geq \frac{Ca}{2} \min\{1,b^{-2}\}
        \int_{B(0,\widetilde{R}-\eta,\widetilde{R}+\eta)}(|\widetilde{u}|^2
	+  |\widetilde{h}\nabla \widetilde{u}|^2)\,d x \\
	& - \max\{1,a^{-2}\}C'\e^{-\delta a
          /\widetilde{h}}\int_{B(0,\widetilde{R}_1-\eta,\widetilde{R}_1+\eta)}(|\widetilde{u}|^2
        + |\widetilde{h}\nabla \widetilde{u}|^2)dx.
      \end{split}
    \end{split}
  \end{equation*}
  Then scaling back yields
  \begin{equation}\label{eq3.45}
    \begin{split}
      -h^{-1/3}\Ima \int_{r=\widetilde{R}_2h^{-1/3}} h\partial_r
      u\cdot\overline{u}\,d\sigma &\geq C_1
      \int_{A(\widetilde{R},\eta,h)}(|u|^2
      + |h\nabla u|^2)\,d x \\
      & -C_2 \e^{-\delta a /
        h^{4/3}}\int_{A(\widetilde{R}_1,\eta,h)}(|u|^2 + |h\nabla
      u|^2)\,dx,
    \end{split}
  \end{equation}
  for some constants $C_1,C_2>0$. By \eqref{eq3.50} we get that
  \begin{equation*}
    \begin{split}
      \int_{B(0,\widetilde{R}_2h^{-1/3})} v\overline{u} dx &=
      \int_{B(0,\widetilde{R}_2h^{-1/3})} (P_V-\lambda^2)u \cdot \overline{u} dx \\
      &= \int_{B(0,\widetilde{R}_2h^{-1/3})} ( (V-\lambda^2)|u|^2 +
      |h\nabla u|^2 ) dx - \int_{r=\widetilde{R}_2h^{-1/3}}
      h^2\partial_r u\cdot\overline{u}\,d\sigma.
    \end{split}
  \end{equation*}
  Taking the imaginary part yields that
  \begin{equation*}
    - \Ima \int_{r=\widetilde{R}_2h^{-1/3}} h\partial_r u\cdot\overline{u}\,d\sigma 
    =  h^{-1}\Ima (v | u ).
  \end{equation*}
  This together with \eqref{eq3.45} yields the statement of Lemma
  \ref{lem:Out}.
\end{proof}
\section{Resonance free region}
In this section we give a proof of Theorem \ref{thm:2} and show that
away from $0$ there are no resonances super-exponentially close to the
real axis.
The proof is standard and can be found for instance in \cite{Sj02ln}.
We will present it here for the reader's sake. The principal idea is
that assuming \eqref{thm:1.1} we can extend the resolvent
$(P_{V}-\mu^2)^{-1}$ holomorphically to $\mu$ in an exponentially
small disc centered at $\lambda$ as an operator
$L^2_{comp} \to H^2_{loc}$.
\\
\par
Here, we are only interested in the poles of the resolvent close to
the real axis. Therefore, let $\lambda \in I\Subset \R\backslash\{0\}$ so
that \eqref{thm:1.1} is valid and let $\Omega\subset\C\backslash i \R$
be a complex open neighborhood of $I$ such that the resolvent
\begin{equation*}
  R_V(\mu)\defeq (P_{V}-\mu^2)^{-1}, \quad \mu \in \Omega
\end{equation*}	
is holomorphic for $\Ima \mu >0$ and continues meromorphically to
$\Omega$, see Theorem \ref{thm:Mc2}. Next, notice that for
$\mu\in \Omega$
\begin{equation}\label{B1}
  R_V(\mu)= R_0(\mu)(1+Q(\mu))^{-1},
\end{equation}	
where $Q(\mu) = VR_0(\mu)$. This expression makes sense since it holds
for $\Ima \mu >0$ and by analytic Fredholm theory $(1+Q(\mu))^{-1}$
continues meromorphically from $\Ima \mu >0$ to $\mu\in\Omega$. To see
this let first $\Ima\mu>0$. Since $\Omega$ does not contain any
discrete spectrum of $P_V$ - which is situated on $i\R_+$ in the $\mu$
variable - we have that $Q(\mu)$ is a holomorphic family compact
operator $L^2\to L^2$ for $\Ima\mu>0$. Recall \eqref{eq3.2.1}, let
$R>R_0$ and let $1_{B(0,R_0)}\prec\chi \prec 1_{B(0,R)}$. Recall from
Theorem \ref{thm:Mc2} that
$\chi R_0(\mu)\chi: L^2(\R^d) \to H^2_0(B(0,R))$ is a holomorphic family
of operators for $\mu\in\Omega$. Hence, by the Rellich-Kondrachov
theorem $\chi R_0(\mu)\chi$ is a holomorphic family of compact
operators $L^2 \to L^2$.  Since $V=V\chi$ it follows that $1+ Q\chi$ is a
holomorphic family of Fredholm operators $L^2 \to L^2$ for
$\mu\in\Omega$. Since $(1+ Q\chi)^{-1}$ for $\Ima\mu \gg 1$ exists by a
Neumann series argument, it follows by analytic Fredholm theory that
$(1+ Q\chi)^{-1}:L^2\to L^2$ extends to a meromorphic family of Fredholm
operators to $\mu\in \Omega$.
\par
Next, notice that $(1+Q)=(1+Q(1-\chi))(1+Q\chi)$ and that
$(1+Q(1-\chi))^{-1} =(1-Q(1-\chi))$ has a holomorphic extension from
$\Ima \mu >0$ to $\mu\in\Omega$ as an operator
$L^2_{\comp}(\R^d)\to L^2_{\comp}(\R^d)$. Hence, $(1+Q(\mu))^{-1}$ has a
meromorphic extension from $\Ima \mu >0$ to $\mu\in\Omega$ as an operator
$L^2_{\comp}(\R^d)\to L^2_{\comp}(\R^d)$ and, thus, \eqref{B1} holds.
\\
\par
%
%
Let us now turn to the proof of Theorem \ref{thm:2}. Suppose that 
$\Omega\Subset \C\backslash i\R$ is a relatively compact open 
complex neighborhood of the interval $I$. Let $\mu\in\Omega$, 
suppose that 
$\Ima \mu \geq 0$ and assume that $R> R_0 +1$ and let
$1_{B(0,R-1)}\prec \chi_0\prec \chi_1\prec\chi_2 \prec \chi_3\prec
1_{B(0,R)}$
with $\chi_j\in \mathcal{C}^{\infty}_c(\R^d)$. We approximate the
interior part of the resolvent $R_V(\mu)\chi_1$ by
\begin{equation*}
  A(\mu) \defeq \chi_2 R_V(\lambda) \chi_1 - R_0(\mu)[P_0,\chi_2] R_V(\lambda)\chi_1.
\end{equation*}	
Then,
\begin{equation}\label{eq_rf1}
  \begin{split}
    (P_V - &\mu^2)A(\mu) \\
    &= \chi_1 + [P_0,\chi_2]R_V(\lambda)\chi_1 +
    \chi_2(\lambda^2 - \mu^2)R_V(\lambda)\chi_1 - (1+VR_0(\mu))[P_0,\chi_2]R_V(\lambda)\chi_1\\
    &= \chi_1 + \chi_2(\lambda^2 - \mu^2)R_V(\lambda)\chi_1 -VR_0(\mu)[P_0,\chi_2]R_V(\lambda)\chi_1.
  \end{split}
\end{equation}
Next, set $u=R_V(\lambda)\chi_1$. Then, 
since $1_{\supp V}\prec1_{B(0,R-1)}\prec \chi_0\prec \chi_1\prec\chi_2 \prec \chi_3\prec
1_{B(0,R)}$, we see that 
\begin{equation*}
\begin{split}
	(P_0-\lambda^2)(1-\chi_2)u & = (1-\chi_2)\chi_1 - [P_0,\chi_2] u \\
						   &=- [P_0,\chi_2] R_V(\lambda)\chi_1,
\end{split}
\end{equation*}
which implies that 
\begin{equation}\label{eq_rf1.1.0}
	(1-\chi_2)u =- R_0(\lambda)[P_0,\chi_2] R_V(\lambda)\chi_1.
\end{equation}
A priori the above two expressions make sense for $\Ima \lambda >0$, however, 
by analytic continuation, they hold as well for $\lambda\in I$. 
\par
Next, notice that the support of the term on the right hand side of \eqref{eq_rf1.1.0}  
is contained in $\supp (1-\chi_2)$ which has empty intersection with 
the support of the potential $V$. Hence, 
\begin{equation*}
	VR_0(\lambda)[P_0,\chi_2]R_V(\lambda) \chi_1=0.
\end{equation*}
Thus, by \eqref{eq_rf1}, we deduce that 
\begin{equation}\label{eq_rf1.1}
\begin{split}
    &(P_V - \mu^2)A(\mu)
    = \chi_1 + \chi_2(\lambda^2-\mu^2)R_V(\lambda)\chi_1 + T\\ 
   & \text{ with } T \defeq 
     -V\chi_3(R_0(\mu)-R_0(\lambda))\chi_3[P_0,\chi_2]R_V(\lambda)\chi_1.
\end{split}
\end{equation}
By \eqref{mc3.1}, we have that $\chi_3R_0(\mu)\chi_3$, as an operator
from $ L^2(\R^d) \to L^2(\R^d)$, is of norm $\mO(\e^{C/h})$ uniformly for $\mu \in \Omega$. 
Then, the Cauchy inequalities imply that 
\begin{equation*}
	\| \partial_{\mu}(\chi_3R_0(\mu)\chi_3)\|_{L^2 \to L^2} 
	= \mO(\e^{C/h})
\end{equation*}
uniformly for $\mu\in\widetilde{\Omega}$, where $\widetilde{\Omega}\Subset \Omega$ 
is a slightly smaller complex open neighborhood of $I$ strictly contained in $\Omega$. 
Thus, for any $\lambda\in I$ and any $\mu\in \widetilde{\Omega}$,
\begin{equation}\label{eq_rf1.2}
	\|\chi_3R_0(\mu)\chi_3-\chi_3R_0(\lambda)\chi_3\|_{L^2 \to L^2}  
	= \mO(|\mu-\lambda|\e^{C/h}).
\end{equation}

By \eqref{thm:1.1b}, we see that 
\begin{equation*}
	\|[P_0,\chi_2]R_V(\lambda)\chi_1\|_{L^2 \to L^2} = \mO\!\left(\e^{Ch^{-4/3}\log \frac{1}{h}}\right),
\end{equation*}
which in combination with \eqref{eq_rf1.1}, \eqref{eq_rf1.2} and \eqref{int1.0} gives that 
\begin{equation}\label{eq_rf1.3}
	\|T \|_{L^2 \to L^2}  =  \mO\!\left(|\mu-\lambda|\e^{Ch^{-4/3}\log \frac{1}{h}}\right).
\end{equation}
Notice that $\supp T\subset \supp V \subset B(0,R_0)$, which yields that $T$ maps 
$L^2(\R^d) \to L^2_{\comp}(B(0,R))$.
\\
\par
For the exterior part of the resolvent $R_V(\mu)(1-\chi_1)$ we use the
approximation
\begin{equation*}
  B(\mu) \defeq (1-\chi_0) R_0(\mu)(1- \chi_1) 
  + A(\mu)[P_0,\chi_0]R_0(\mu)(1-\chi_1). 
\end{equation*}	
Then,
\begin{equation}\label{eq_rf2}
  \begin{split}
    (P_V - \mu^2)B(\mu)
    &= 1-\chi_1 - [P_0,\chi_0]R_0(\mu)(1-\chi_1) \\
    &\phantom{=}
    +   (\chi_1 + \chi_2(\lambda^2 - \mu^2)R_V(\lambda)\chi_1+T)[P_0,\chi_0]R_0(\mu)(1-\chi_1)\\
    &= (1-\chi_1) + (\chi_2(\lambda^2 -
    \mu^2)R_V(\lambda)+T)[P_0,\chi_0]R_0(\mu)(1-\chi_1).
  \end{split}
\end{equation}
Here, we used as well that $\chi_0\prec \chi_1$. 
\\
\par
Put $\widetilde{R}(\mu)\defeq A(\mu) + B(\mu):~ L^2_{\comp}(\R^d)
\to H^2_{\loc}(\R^d)$. Then, combining \eqref{eq_rf1.1} and \eqref{eq_rf2} gives 
\begin{equation}\label{eq_rf3}
  (P_V - \mu^2)\widetilde{R}  = 1 + K 
\end{equation}
with
\begin{equation}\label{eq_rf4}
  K =  \chi_2(\lambda^2 - \mu^2)R_V(\lambda)(\chi_1 + [P_0,\chi_0]R_0(\mu)(1-\chi_1)) 
  	+T(1+[P_0,\chi_0]R_0(\mu)(1-\chi_1)). 
\end{equation}
%
%
Using \eqref{mc3.1} we have that $[P_0,\chi_0]R_0(\mu)$ as an operator
from $ L^2_{\comp}(B(0,R)) \to L^2_{\comp}(B(0,R))$ is of norm
$\mO(\e^{C/h})$ uniformly for $\mu \in \Omega$.
%
%
It then follows by \eqref{thm:1.1} and \eqref{eq_rf1.3} that
\begin{equation}\label{eq_rf5}
  K: ~ L^2_{\comp}(B(0,R)) \longrightarrow L^2_{\comp}(B(0,R))
\end{equation}
has operator norm
$ \leq \mO(|\lambda-\mu|\, \e^{Ch^{-4/3}\log h^{-1}})$ for some
constant $C>0$. Therefore, if
$|\lambda -\mu| \leq  \e^{-2Ch^{-4/3}\log h^{-1}}$, for $h>0$ small
enough, it follows that $(1+K)$ has a bounded inverse
\begin{equation}\label{eq_rf8.1}
  (1+K)^{-1} :~ L^2_{\comp}(B(0,R)) \longrightarrow L^2_{\comp}(B(0,R))
\end{equation}
and we get that
\begin{equation}\label{eq_rf8}
  R_V(\mu) = \widetilde{R}(\mu)(1+K)^{-1}: 
  ~ L^2_{\comp}(B(0,R)) \longrightarrow H^2_{\loc}(\R^d)
\end{equation}
is holomorphic for $|\lambda -\mu| \leq  \e^{-2Ch^{-4/3}\log h^{-1}}$ for
$h>0$ small enough. For $\mu$ still in the same set, it follows by
\eqref{B1} that
\begin{equation*}
\begin{split}
    R_V(\mu) &= R_0(\mu) - R_V(\mu)Q(\mu) \\
    & = R_0(\mu) - R_V(\mu)\chi_3Q(\mu).
  \end{split}
\end{equation*}
Since both $Q(\mu): L^2_{\comp}(\R^d)\to  L^2_{\comp}(\R^d)$ and
$R_0(\mu): L^2_{\comp}(\R^d)\to  H^2_{\loc}(\R^d)$ are holomorphic
families of operators, it follows by \eqref{eq_rf8} that
\begin{equation}\label{eq_rf9}
  R_V(\mu) : 
  ~ L^2_{\comp}(\R^d) \longrightarrow H^2_{\loc}(\R^d)
\end{equation}
is holomorphic for $|\lambda -\mu| \leq  \e^{-2Ch^{-4/3}\log h^{-1}}$, for
$h>0$ small enough, which completes the proof of Theorem \ref{thm:2}.
\section{Decay of eigenfunctions of Schr{\"o}dinger operators with bounded
  potentials}
In this section we prove Theorem \ref{thm3}. Let $d\geq 2 $, let
$0\not\equiv W\in L^{\infty}(\R^d)$ with $\|W\|_{\infty} \leq C_W$ and let
$u$ be a bounded solution to
\begin{equation}\label{d1}
  -\Delta u + W u = 0 \quad \text{in } \R^d
\end{equation}
and suppose that $u$ admits the estimate $|u(x)|\leq C\exp(-c|x|)$ for
$|x|>1$ and some constants $C,c>0$. Notice that in particular
$u\in H^2_h(\R^d)$, the semiclassical Sobolev space.  Let $h\in (0,1]$ and
let $U_{h}: L^2(\R^d) \to L^2(\R^d)$ be the unitary map defined by
\begin{equation}\label{d1.1}
  (U_{h}\phi)(x) = h^{d/2}\phi(hx). 
\end{equation}
Then,
\begin{equation}\label{d2}
  U_h^*(-\Delta + W) U_h = -h^2 \Delta + W(h^{-1}x) .
\end{equation}
Let $\psi \in \mathcal{C}^{\infty}_c(\R;[0,1])$ be such that
$\supp \psi \subset (1+1/4,1+1/2)$ and $\int \psi dx =1$. Then, set
\begin{equation*}
  \chi_h(x) \defeq \chi_h(|x|) \defeq 1 - \int_0^{|x|}\psi \left(1+\frac{t-1}{h}\right) dt.
\end{equation*}
Notice that $\chi_h \in\mathcal{C}^{\infty}_c(\R^d;[0,1])$ with
support contained in the ball $B(0,2)$ independently of
$h>0$. Moreover, $\chi_h\equiv 1$ on $\overline{B(0,1+h/4)}$ and
$\chi_h = 0$ outside $B(0,1+h/2)$. For any $\alpha\in\N^d\backslash \{0\}$
we have that the support of $\partial^{\alpha}\chi_h $ is contained in the
annulus $B(0,1+h/4,1+h/2)$ with inner radius $1+h/4$ and outer radius
$1+h/2$ and all derivatives satisfy the estimate
$\|\partial^{\alpha}\chi_h\|_{\infty} = \mO(h^{1-|\alpha|})$ for $\alpha\neq 0$.
Similarly, we can construct a
$\widetilde{\chi}_h \in\mathcal{C}^{\infty}_c(\R^d;[0,1])$ so that
$\widetilde{\chi}_h\equiv 1$ on $\supp \nabla \chi_h$ and
$\widetilde{\chi}_h = 0$ outside the annulus $B(0,1,1+h)$. Moreover,
we can arrange so that all derivatives satisfy the estimate
$\|\partial^{\alpha}\widetilde{\chi}_h\|_{\infty} = \mO(h^{1-|\alpha|})$, for
$\alpha\neq 0$.
\par
Set
\begin{equation*}
  \widetilde{u}(x) = (U_h^*u)(x).
\end{equation*}
Then, by \eqref{d1}, \eqref{d2}
\begin{equation}\label{d3}
  \begin{split}
    (-h^2 \Delta &+ W(h^{-1}x)\mathbf{1}_{B(0,2)}(x) )\chi_h(x) \widetilde{u}(x)  \\
    &= \chi_h(x)(-h^2 \Delta + W(h^{-1}x))\widetilde{u}(x)+ [-h^2\Delta,\chi_h(x)]\widetilde{u}(x)\\
    &=[-h^2\Delta,\chi_h(x)] \widetilde{u}(x).
  \end{split}
\end{equation}
%
Notice that
\begin{equation*}
  \begin{split}
    |[-h^2\Delta,\chi_h] \widetilde{u}|^2
    &= | (-h^2\Delta\chi_h)\widetilde{u} -2 h\nabla\chi_h \cdot h\nabla \widetilde{u}|^2\\
    &\leq 2| (-h^2\Delta\chi_h)\widetilde{u}|^2 + 8 | h\nabla\chi_h|^2 |
    h\nabla \widetilde{u}|^2.
  \end{split}
\end{equation*}
Using that that $\widetilde{\chi}_h\equiv 1$ on
$\supp \nabla \chi_h\subset B(0,1+h/4,1+h/2)$ and the estimate on its
derivatives, we see by integration by parts shows that
\begin{equation}\label{d4}
  \begin{split}
    \int |[-h^2\Delta,\chi_h] \widetilde{u}|^2 dx &\leq \mO(h^2)
    \int_{B(0,1,1+h)} | \widetilde{u}|^2 dx
    + \mO(h^2) \int_{\supp\nabla \chi_h} |h\nabla  \widetilde{u}|^2  \,dx \\
    &\leq \mO(h^2) \int_{B(0,1,1+h)} | \widetilde{u}|^2 dx
    + \mO(h^2) \int_{B(0,1,1+h)} |h^2\Delta  \widetilde{u}|^2  \,dx \\
    &\leq \mO(h^2) \int_{B(0,1,1+h)} | \widetilde{u}|^2 dx,
  \end{split}
\end{equation}
where in the last line we used as well that
$-h^2\Delta \widetilde{u} = - W(h^{-1}x)\widetilde{u}$ by \eqref{d1},
\eqref{d2}. Hence, setting
$V(x;h) = W(h^{-1}x)\mathbf{1}_{B(0,2)}(x) \in L^{\infty}(\R^d)$ with
$\supp V \subset B(0,2)$, we get by \eqref{d3}, \eqref{d4} and \eqref{d1.1}
that
\begin{equation}\label{d5}
  \| (-h^2 \Delta +V)\chi_h\widetilde{u}\|^2  =
  \mO(h^2) \int_{B(0,h^{-1},h^{-1}+1)} | u|^2 dx \defeq \varepsilon(h)
\end{equation}
Next, we apply Lemma \ref{lem:3.1} with $R=3$: there exists a
real-valued smooth function $\phi\in\mathcal{C}^{\infty}(\R^d)$ and
constants $C>0$ and $h_0\in (0,1]$ such that for any
$v\in\mathcal{C}^{\infty}_c(B(0,3))$ and all $0<h \leq h_0$
\begin{equation}\label{d6}
  \int \e^{2\phi/h^{4/3}}(|v|^2 + |h\nabla v|^2 ) dx 
  \leq \frac{C}{h^{2/3}}
  \int \e^{2\phi/h^{4/3}}| (-h^2 \Delta +V)v|^2 dx. 
\end{equation}	
Notice in particular from the proof of Lemma \ref{lem:3.1} that
$\phi = \max\{\|V\|_{\infty},1\}^{2/3} \phi_0 $ where $\phi_0$ is a
smooth real-valued function which does not depend on the potential $V$
as it stems from the Carleman estimate for the free Laplacian. In fact
$\phi_0$ is a non-constant function since one requires
$|d\phi_0| \neq 0$ for the Carleman estimate to work, see for instance
\cite{Sj02ln}.  Furthermore, since we assume that $W\not\equiv 0$ we
obtain by an easy modification of the proof of Lemma \ref{lem:3.1}
that we can take $\phi = \|V\|_{\infty}^{2/3} \phi_0 $ for $h>0$ small
enough. Let $M\defeq \max_{B(0,3)}\phi_0 - \min_{B(0,3)}\phi_0 >0$,
then, applying \eqref{d6} to $\chi_h\widetilde{u}$, we get in
combination with \eqref{d5} that
\begin{equation}
  \int_{B(0,h^{-1})}|u|^2 dx \leq Ch^{4/3} \e^{ 2M \|V\|_{\infty}^{2/3}/h^{4/3}}
  \int_{B(0,h^{-1},h^{-1}+1)} | u|^2 dx .
\end{equation}	
Since we assumed that $u$ that $\|u\|_2 =1$, we get that 
for $h>0$ small enough 
\begin{equation}
  \int_{B(0,h^{-1},h^{-1}+1)} | u|^2 dx  \geq 2C h^{-4/3} \e^{ - 2M \|V\|_{\infty}^{2/3}/h^{4/3}}.
\end{equation}
Setting $R=h^{-1}$, we conclude formula \eqref{RL1} and hence the
proof of Theorem \ref{thm3}.
%
%
%
%
%
\def\cprime{$'$} 

\providecommand{\bysame}{\leavevmode\hbox to3em{\hrulefill}\thinspace}
\providecommand{\MR}{\relax\ifhmode\unskip\space\fi MR }
\providecommand{\MRhref}[2]{%
  \href{http://www.ams.org/mathscinet-getitem?mr=#1}{#2}
}
\providecommand{\href}[2]{#2}

\end{document}